\numberwithin{equation}{section}
 \def\MR#1{} 
\let\cal=\mathcal      
\def\mcc{M\raise.5ex\hbox{c}C}
\def\mccarthy{M\raise.5ex\hbox{c}Carthy}
\def\Hu{\H}
\def\sz{Szeg\Hu{o} }
\def\M{{\cal M}}
\let\i=\infty
\def\la{\langle}
\def\ra{\rangle}
\def\={\ = \ }
\def\C{\mathbb C}
\def\T{\mathbb T}
\def\B{\mathbb B}
\def\be{\setcounter{equation}{\value{theorem}} \begin{equation}}
\def\ee{\end{equation} \addtocounter{theorem}{1}}
\def\beq{\begin{eqnarray*}}
\def\eeq{\end{eqnarray*}}
\def\bp{{\sc Proof: }}
\def\ep{{}{\hfill $\Box$} \vskip 5pt \par}
\def\bl{\begin{lemma}}
\def\el{\end{lemma}}
\def\bt{\begin{theorem}}
\def\et{\end{theorem}}
\def\bprop{\begin{prop}}
\def\eprop{\end{prop}}
\def\bd{\begin{definition}}
\def\ed{\end{definition}}
\def\br{\begin{remark}}
\def\er{\end{remark}}
\def\bexer{\begin{exercise}}
\def\eexer{\end{exercise}}
\newtheorem{theorem}{Theorem}[section]
\newtheorem{proposition}[theorem]{Proposition}
\newtheorem{lemma}[theorem]{Lemma}
\newtheorem{corollary}[theorem]{Corollary}
\newtheorem{definition}[theorem]{Definition}
\newtheorem{question}[theorem]{Question}
\newtheorem{defin}[theorem]{Definition}
\newtheorem{example}[theorem]{Example}
\renewcommand\Re{\mathrm{Re\, }}
\newcommand{\cH}{\mathcal H}
\newcommand{\bB}{\mathbb B}
\newcommand{\bD}{\mathbb D}
\newcommand{\ol}[1]{\overline{#1}}
\DeclareMathOperator{\Mult}{Mult}
\newcommand\mm{{\rm Mult}(\M)}
\newcommand\mom{\M \odot \M}
\newcommand\chp{{\mathcal H}^p}
\newcommand\cho{{\mathcal H}^1}
\newcommand\cht{{\mathcal H}^2}
\newcommand\chq{{\mathcal H}^q}
\newcommand\Han{{\rm Han}}
\newcommand\hha{{\mathbb H}_{1/2}}
\newcommand\dd{d}
\newcommand{\rankone}[2]{\langle \cdot , #1 \rangle #2}
\title{An $H^p$ scale for complete Pick spaces}
\date{\today}
\author[A. Aleman]{Alexandru Aleman}
\address{Lund University, Mathematics, Faculty of Science, P.O. Box 118, S-221 00 Lund, Sweden}
\email{alexandru.aleman@math.lu.se}
\author[M. Hartz]{Michael Hartz}
\address{Fachrichtung Mathematik, Universit\"at des Saarlandes, 66123 Saarbr\"ucken, Germany}
\email{hartz@math.uni-sb.de}
\thanks{M.H. was partially supported by a GIF grant.}
\author[J. M\raise.5ex\hbox{c}Carthy]{John E. M\raise.5ex\hbox{c}Carthy}
\address{Department of Mathematics, Washington University in St. Louis, One Brookings Drive,
	St. Louis, MO 63130, USA}
\email{mccarthy@wustl.edu}
\thanks{J.M. was partially supported by National Science Foundation Grant DMS 1565243.}
\author[S. Richter]{Stefan Richter}
\address{Department of Mathematics, University of Tennessee, 1403 Circle Drive, Knoxville, TN 37996-1320, USA}
\email{srichter@utk.edu}
\keywords{Reproducing kernel Hilbert space, Nevanlinna--Pick kernel, Drury-Arveson space, $H^p$ space,
interpolation space, Hankel operator}
\subjclass[2010]{Primary 46E22; Secondary 46B70, 47B35}
\begin{document}

\begin{abstract}
  We define by interpolation a scale analogous to the Hardy $H^p$ scale for complete Pick spaces,
   and establish some of the basic properties
  of the resulting spaces, which we call  $\mathcal{H}^p$.
  In particular, we obtain an $\mathcal{H}^p-\mathcal{H}^q$ duality and establish
  sharp pointwise estimates for functions in $\mathcal{H}^p$.
\end{abstract}

\bibliographystyle{plain}
\maketitle

\section{Introduction}
\label{seca}

Let $\M$ be a reproducing kernel Hilbert space on a set $X$, with kernel function $k$.
Let $\mm$ denote the multiplier algebra of $\M$. We shall make the following assumption
throughout our paper:
\[
(A) \qquad \mm \ {\rm is\ densely\ contained\ in\ } \M .
\]
We shall let $\mom$ denote the weak product of $\M$ with itself, which is
\be
\label{eqa1}
\mom  \ := \ \Big\{ \sum_{n=1}^\infty f_n g_n \ : \ \sum_n \| f_n \|_\M \| g_n\|_\M < \i \Big\}.
\ee
This is a Banach space, where the norm of a function $h$ is the infimum
of $\sum_n \| f_n \|_\M \| g_n\|_\M$ over all representations of $h$ as $\sum_n f_n g_n$.

If we use the complex method of interpolation to interpolate between $\mom$ and its anti-dual
(the space of bounded conjugate linear functionals) we get a scale
of Banach function spaces, whose mid-point is the Hilbert space $\M$. By analogy with the case where $\M$ is the
Hardy space $H^2$ on the unit disk,  where the end-points become $H^1$ and BMOA and the
intermediate spaces are $H^p$ for $1 < p < \infty$, we shall define
\be
\label{eqa2}
\chp \ := \ [ \mom, (\mom)^\dagger ]_{[\theta]}
\ee
where $0 \leq \theta \leq 1$, we set
$p = \frac{1}{1-\theta}$, and $A^\dagger$ denotes the anti-dual of $A$. See \cite{bl76} for background on interpolation.

We consider $\chp$ to play the r\^ole of the $H^p$ scale for the space $\M$.
We should note that even when $k$ is the \sz kernel, the spaces $\chp$ are isomorphic but not isometric
to $H^p$ \cite{fs72}, so $\chp$ can at best be considered a renormed version of $H^p$.
In Section~\ref{secb} we collect properties of the $\chp$ spaces for general $\M$.
In Section~\ref{secc} we specialize to the case that $\M$ is a complete Pick space,
and prove that several inequalities that hold in general become equivalences in complete Pick spaces.

Our main result is the following. We let
$\delta_x$ denote the functional of evaluation at $x \in X$ and $k_x(y) = k(y,x)$ be the reproducing kernel of $\mathcal{M}$.
We will explain what a normalized complete Pick kernel is in Section \ref{secc}, and
we will explain ${\rm Han}$ and  ${\rm Han}_0$,
 the dual and  predual of $\cho$, in Section \ref{secb}.

{\bf Theorem \ref{thmc1}.}
{\em
Let $k$ be a normalized complete Pick kernel on $X$. Then for all $1 < p < \infty$ with $\frac{1}{p} + \frac{1}{q} = 1$,
  \begin{enumerate}[label=\normalfont{(\alph*)},resume]
    \item $\| \delta_x\|_{(\chp)^*} = \|k_x\|_{\chq} \approx k(x,x)^{1/p}$,
   \item $\| \delta_x\|_{(\cH^1)^*} = \| k_x\|_{{\rm Han}} = k(x,x)$,
   \item $\| \delta_x \|_{{\rm Han}_0^*} = \| \delta_x\|_{{\rm Han}^*} = \|k_x \|_{\cho} \lesssim 1 + \log(k(x,x))$,
\item for the Drury-Arveson kernel $S$,
 $\| \delta_x \|_{{\rm Han}_0^*}  \approx 1 + \log(S(x,x))$,
  \end{enumerate}
  where the implied constants
do not  depend
 on $k$ or $x$.
}

 Here and in the sequel, if $f,g:X \to [0,\infty)$ are functions, we write $f(x) \lesssim g(x)$ to mean that there exists a constant $C$ so that $f(x) \le C g(x)$
 for all $x \in X$, and $f \approx g$ if $f \lesssim g$ and $g \lesssim f$.

 We  show in Examples \ref{exam316} and \ref{exam317} that the estimate in part (c) of the theorem may not be an equivalence. In Theorem \ref{thmIS} we show that when $k$ is a normalized complete Pick kernel,
 the interpolating sequences for $\cho$ and $\cht$ coincide.

In Section \ref{sece} we close with some questions about $\chp$ scales.

\section{General Spaces}
\label{secb}

Let $\mathcal{M}$ be a reproducing kernel Hilbert space satisfying assumption (A).
The space $(\mom)^\dagger$ was described in \cite{ahmrWP}; let us recall that description.
We let $\M \otimes_\pi \M$ denote the projective tensor product of $\M$ with itself.
Its dual is ${\mathcal B}(\M, \overline{\M})$, where $\overline{\M}$ is the complex conjugate of $\M$.
Let $\rho: \M \otimes_\pi \M \to \mom$ be defined by
\[
\rho: \sum f_n \otimes g_n \mapsto \sum f_n(x) g_n(x) .
\]
Then $( \mom)^*$ can be identified with $({\rm ker} \rho)^\perp$.
We can identify $(\mom)^\dagger$ with
\[
{\rm Han} \ := \ \{ \overline{T 1} : T \in ({\rm ker} \rho)^\perp \} .
\]
If $b \in {\rm Han}$, which is a subset of $\M$, the corresponding conjugate linear functional on $\mom$ is given by
\[
\Lambda_b : f \mapsto \la b , f \ra \quad \forall \ f \in \M .
\]
We write $H_b$ for the unique operator $H \in {\mathcal B}(\M, \overline{\M}) \cap ({\rm ker} \rho)^\perp$
that satisfies $H_b 1 = \overline{b}$.
This operator is characterized by the identity
\begin{equation*}
  \langle H_b f, \overline{\phi} \rangle_{\overline{\mathcal{M}}}
  = \langle \phi f, b \rangle_{\mathcal{M}} \quad (\phi \in \Mult(\mathcal{M}), f \in \mathcal{M}).
\end{equation*}

We put a norm on ${\rm Han}$ by declaring $\| b \|$ equal to
the operator norm of $H_b$.
Let
\begin{eqnarray}
\nonumber
{\mathcal X}(\M)  \ := \ \{ b \in \M : \ \exists
\ C \geq 0 \ &{\rm s.t.\ } |\langle b , \phi f \ra | \leq
C \| \phi \|_\M \| f \|_\M \\
& \ \forall \ \phi \in \mm, f \in \M \} .
\label{eqxm}
\end{eqnarray}
Then under assumption (A) it is proved in \cite[Thm 2.5]{ahmrWP} that
\[
 {\rm Han} \subseteq {\mathcal X}(\M) .
\]
  In particular, $\Han \subseteq \mathcal{M} \odot \mathcal{M}$ contractively, so $(\mathcal{M} \odot \mathcal{M}, \Han)$
is a compatible couple of Banach spaces.
For $1 \le p < \infty$, we shall let $\chp$ be defined by
\begin{equation*}
  \mathcal{H}^p = [ \mathcal{M} \odot \mathcal{M}, \Han]_{[\theta]}
\end{equation*}
with $\theta = \frac{p-1}{p}$.
Since $\Han$ is dense in $\mathcal{M} \odot \mathcal{M}$, we have $\mathcal{H}^1 = \mathcal{M} \odot \mathcal{M}$
and $[\mathcal{H}^1,\Han]_{[1]} = \Han$ with equality of norms; see \cite[Thm 4.2.2]{bl76}.
Since we shall use it several times, let us state Calder\'on's reiteration theorem  \cite[Thm. 4.6.1]{bl76}.
\begin{theorem}
\label{reit}
Let $X_0, X_1$ be a compatible couple of complex Banach spaces with $X_1 \subseteq X_0$.
For every $0 \leq \theta \leq 1$, and $0 \leq \theta_0 \leq \theta_1 \leq 1$, let $\eta = (1-\theta) \theta_0 + \theta \theta_1$. Then
 we have
\[
[ \ [ X_0, X_1]_{[\theta_0]}, [ X_0, X_1]_{[\theta_1]} \ ]_{[\theta]}
\ = \ [ X_0, X_1]_{[\eta]}.
\]
\end{theorem}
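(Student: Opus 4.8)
The plan is to deduce the stated identity from two contractive inclusions, as is standard for the complex method. Write $S=\{z:0<\Re z<1\}$ and let $\mathcal F(X_0,X_1)$ be Calder\'on's space of $(X_0+X_1)$-valued functions, bounded and continuous on $\overline S$, analytic on $S$, with $t\mapsto f(j+it)$ continuous into $X_j$ and vanishing at infinity, normed by $\|f\|_{\mathcal F}=\max_j\sup_t\|f(j+it)\|_{X_j}$, so that $[X_0,X_1]_{[\eta]}=\{f(\eta):f\in\mathcal F(X_0,X_1)\}$ under the quotient norm. Put $Y_j=[X_0,X_1]_{[\theta_j]}$; since $X_1\subseteq X_0$ and $\theta_0\le\theta_1$ we have $Y_1\subseteq Y_0\subseteq X_0$, so $(Y_0,Y_1)$ is again a nested compatible couple and the outer interpolation is meaningful. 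I will treat $0<\theta_0\le\theta_1<1$, the cases $\theta_j\in\{0,1\}$ reducing to the endpoint identities for $[\,\cdot\,]_{[0]}$ and $[\,\cdot\,]_{[1]}$. The basic tool is the interior estimate
\[
  \|f(\theta_j+is)\|_{Y_j}\le\|f\|_{\mathcal F(X_0,X_1)},\qquad f\in\mathcal F(X_0,X_1),\ s\in\mathbb R,
\]
which is immediate from translation invariance: the translate $f_s(z)=f(z+is)$ again lies in $\mathcal F(X_0,X_1)$ with the same norm and represents its own value $f_s(\theta_j)=f(\theta_j+is)$; strong continuity of $s\mapsto f(\theta_j+is)$ into $Y_j$ follows likewise from strong continuity of translation on the boundary functions, which lie in $C_0(\mathbb R;X_j)$.

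For the inclusion $[X_0,X_1]_{[\eta]}\hookrightarrow[Y_0,Y_1]_{[\theta]}$ I use the affine map
\[
  \omega(z)=(1-z)\theta_0+z\theta_1,
\]
which carries $S$ onto the substrip $\{\theta_0<\Re w<\theta_1\}$, sends $\{\Re z=0\}$ and $\{\Re z=1\}$ to $\{\Re w=\theta_0\}$ and $\{\Re w=\theta_1\}$, and satisfies $\omega(\theta)=\eta$. Given $a\in[X_0,X_1]_{[\eta]}$ and a near-optimal $f\in\mathcal F(X_0,X_1)$ with $f(\eta)=a$, set $G=f\circ\omega$. Then $G$ is analytic on $S$ with $G(\theta)=a$, and by the interior estimate $G(it)=f(\theta_0+i(\theta_1-\theta_0)t)\in Y_0$ and $G(1+it)\in Y_1$, each of norm at most $\|f\|_{\mathcal F}$; together with the continuity just noted this shows $G\in\mathcal F(Y_0,Y_1)$, whence $\|a\|_{[Y_0,Y_1]_{[\theta]}}\le\|f\|_{\mathcal F}$. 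Infimizing over $f$ gives the contraction.

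The reverse inclusion $[Y_0,Y_1]_{[\theta]}\hookrightarrow[X_0,X_1]_{[\eta]}$ is the heart of the matter. One is handed $G\in\mathcal F(Y_0,Y_1)$ with $G(\theta)=a$ and must manufacture some $g\in\mathcal F(X_0,X_1)$ with $g(\eta)=a$ and $\|g\|_{\mathcal F}\lesssim\|G\|_{\mathcal F(Y_0,Y_1)}$. One might hope to invert the easy direction by a reparametrization $g=\Psi\circ(w,\zeta)$ built from inner representers of the boundary values, but this is obstructed: to land in $X_0$ on $\{\Re z=0\}$ and in $X_1$ on $\{\Re z=1\}$ one would need a self-map $\zeta$ of the strip with $\Re\zeta=0$, respectively $\Re\zeta=1$, on the two boundary lines and $\Re\zeta(\eta)=\theta_1\ne\eta$, and uniqueness of bounded harmonic functions on $S$ forces instead $\Re\zeta\equiv\Re z$. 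Thus no change of variables suffices, and the value at $\eta$ and the two boundary conditions must be reconciled by a genuine two-variable construction. The plan is to first reduce, by Stafney's density lemma for the couple $(Y_0,Y_1)$, to the case where $G$ is elementary, a finite sum $\sum_k\phi_k(z)b_k$ with $\phi_k$ scalar and $b_k\in Y_0\cap Y_1=Y_1$; each $b_k\in Y_1=[X_0,X_1]_{[\theta_1]}$ then carries an inner representer $h_k\in\mathcal F(X_0,X_1)$ with $h_k(\theta_1)=b_k$, out of which one assembles an analytic function of two strip variables and extracts $g$ by evaluating along a suitable analytic slice, passing back to general $G$ by a normal-families limit.

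The step I expect to be the real obstacle is precisely this assembly: producing a single-variable $g$ with the correct value at $\eta$ and the correct boundary targets while keeping $\|g\|_{\mathcal F}$ controlled by $\|G\|_{\mathcal F(Y_0,Y_1)}$ rather than by a lossy term-by-term bound. I would either carry this out via the classical Calder\'on double-strip argument, or bypass it by duality: under the density of $X_1$ in $X_0$ one has $([X_0,X_1]_{[\eta]})^*\cong(\overline{X^*})^{[\eta]}$ for the second complex method, and a corresponding reiteration there combined with the first inclusion yields the reverse inclusion. Either way, the two contractions together give $[\,[X_0,X_1]_{[\theta_0]},[X_0,X_1]_{[\theta_1]}\,]_{[\theta]}=[X_0,X_1]_{[\eta]}$ with equivalent norms.
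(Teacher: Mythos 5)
Your easy inclusion $[X_0,X_1]_{[\eta]}\hookrightarrow[Y_0,Y_1]_{[\theta]}$ via the affine substitution $\omega(z)=(1-z)\theta_0+z\theta_1$ and the translation-invariance interior estimate is correct and is the standard argument (modulo a small fixable point: the interior values $f(\theta_j+it)$ need not vanish in $Y_j$-norm as $|t|\to\infty$, so membership of $f\circ\omega$ in $\mathcal{F}(Y_0,Y_1)$ should be arranged by first multiplying by $e^{\delta(z^2-\eta^2)}$ or by working with Stafney's dense subclass). The genuine gap is that the reverse inclusion $[Y_0,Y_1]_{[\theta]}\hookrightarrow[X_0,X_1]_{[\eta]}$ --- which you yourself correctly identify as ``the heart of the matter,'' and which your harmonic-uniqueness observation shows cannot be obtained by any change of variables --- is never actually proved. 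After the Stafney reduction you only announce two possible strategies (``the classical Calder\'on double-strip argument'' or a duality bypass) without executing either. But the double-strip assembly \emph{is} the nontrivial content of the theorem (Calder\'on's \S 12.3, Bergh--L\"ofstr\"om Thm.\ 4.6.1): one must combine representers of the boundary values $G(it)\in Y_0$ and $G(1+it)\in Y_1$ into a single element of $\mathcal{F}(X_0,X_1)$ with norm controlled by $\|G\|_{\mathcal{F}(Y_0,Y_1)}$, via a vector-valued Poisson-integral/log-subharmonicity argument, and it is exactly here that the density of $X_0\cap X_1=X_1$ in $Y_0\cap Y_1$ enters (a hypothesis silently suppressed in the statement, though automatic in this paper's applications since $\Han$ is dense in $\mom$). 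Naming the obstacle is not the same as overcoming it, so as written the proposal proves only one contractive inclusion.

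The proposed duality bypass does not close the gap either: the duality theorem requires density of $X_1$ in $X_0$ (not assumed in the statement), identifies the dual with the \emph{second} (upper) complex method, and returning to the first method requires additional input (reflexivity of an endpoint, or Cwikel-type results); moreover it would at best yield the identity up to isomorphism, whereas your final claim of ``equivalent norms'' is already too weak for the way Theorem \ref{reit} is used in the paper --- Proposition \ref{proph0} and Theorem \ref{thm:dual} draw \emph{isometric} conclusions from it, and the classical theorem does give equality of norms. The endpoint cases $\theta_j\in\{0,1\}$, which you wave off, are also precisely the ones the paper invokes (e.g.\ $\mathcal{H}^q=[\mathcal{H}^2,\Han]_{[s]}$ with $\theta_1=1$), and they rest on the density facts of \cite[Thm.\ 4.2.2]{bl76} rather than on a formal ``endpoint identity.'' For comparison: the paper itself offers no proof at all --- Theorem \ref{reit} is quoted verbatim from \cite[Thm.\ 4.6.1]{bl76} --- so the relevant benchmark is the classical proof, whose hard half is exactly what is missing from your attempt.
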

First, we remark that the spaces $\chp$ are indeed function spaces.

\begin{proposition}
  \label{prop:basic_inclusion}
  The space $\mathcal{H}^p$ is a Banach function space on $X$ for $1 \le p < \infty$.
  Moreover, if $1 \le p \le q < \infty$, then
\[
\mathcal{M} \odot \mathcal{M} \supseteq \chp \supseteq \chq \supseteq {\rm Han}
\]
with contractive inclusions.
\end{proposition}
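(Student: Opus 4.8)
The plan is to derive everything from the general theory of complex interpolation applied to the nested couple $(X_0,X_1):=(\mom,\Han)$, together with Calder\'on's reiteration theorem (Theorem \ref{reit}). Write $\theta_r=\frac{r-1}{r}$, so that $\mathcal{H}^r=[X_0,X_1]_{[\theta_r]}$, and recall from \cite{bl76} the two standard contractive inclusions valid for any compatible couple $(A_0,A_1)$, namely $A_0\cap A_1\hookrightarrow[A_0,A_1]_{[\theta]}\hookrightarrow A_0+A_1$; the first follows from the interpolation inequality $\|a\|_{[\theta]}\le\|a\|_{A_0}^{1-\theta}\|a\|_{A_1}^{\theta}\le\max(\|a\|_{A_0},\|a\|_{A_1})$. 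Since $\Han$ sits contractively inside $\mom$, one checks at once that $X_0+X_1=X_0$ and $X_0\cap X_1=X_1$ \emph{isometrically}. Consequently these two facts already read $\Han\hookrightarrow\mathcal{H}^r\hookrightarrow\mom$ contractively for every $r$, which is the leftmost and rightmost part of the asserted chain.

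For the function-space assertion I would first record that $\mom$ is itself a Banach function space on $X$: for $h=\sum_n f_n g_n$ and any $x$, the bound $|f_n(x)|\le\|f_n\|_{\mathcal{M}}k(x,x)^{1/2}$ and the analogous one for $g_n$ give $|h(x)|\le k(x,x)\sum_n\|f_n\|_{\mathcal{M}}\|g_n\|_{\mathcal{M}}$, and taking the infimum over representations yields $|h(x)|\le k(x,x)\,\|h\|_{\mom}$. Because the complex interpolation space $\mathcal{H}^p$ is, by construction, a Banach space embedded contractively in $X_0+X_1=\mom$, I may identify it with an honest space of functions on $X$, and the estimate $|h(x)|\le k(x,x)\,\|h\|_{\cho}\le k(x,x)\,\|h\|_{\chp}$ shows point evaluations are bounded. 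Hence $\mathcal{H}^p$ is a Banach function space on $X$.

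The one inclusion that is not immediate is $\chq\hookrightarrow\chp$ for $1\le p\le q$, i.e.\ $\theta_p\le\theta_q$, and this is where reiteration enters. Applying Theorem \ref{reit} to $(X_0,X_1)$ with inner parameters $\theta_0=0$ and $\theta_1=\theta_q$ and outer parameter $\sigma:=\theta_p/\theta_q\in[0,1]$, one gets $\eta=\sigma\theta_q=\theta_p$ and therefore
\[
[\cho,\chq]_{[\sigma]}=\big[[X_0,X_1]_{[0]},\,[X_0,X_1]_{[\theta_q]}\big]_{[\sigma]}=[X_0,X_1]_{[\theta_p]}=\chp ,
\]
with equality of norms, where I use the already established identity $\cho=\mom=[X_0,X_1]_{[0]}$. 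Since $\chq\hookrightarrow\cho$ contractively, the couple $(\cho,\chq)$ is again nested, so the inclusion $A_0\cap A_1\hookrightarrow[A_0,A_1]_{[\sigma]}$ applied to it, together with $\cho\cap\chq=\chq$ isometrically, gives $\chq\hookrightarrow\chp$ contractively. Concatenating the three inclusions produces the stated chain.

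The main obstacle is precisely this middle inclusion: the naive ``conformal change of variables on the strip'' proof of monotonicity fails, because one cannot move an interior point of $[0,1]$ while fixing both boundary lines of the strip setwise, so it is cleanest to route monotonicity through reiteration as above. Everything else is bookkeeping once the nestedness $\Han\hookrightarrow\mom$ and the pointwise bound on $\mom$ are in hand. I would only double-check the degenerate endpoints, $p=1$ (where $\sigma=0$ and the reiteration identity reduces to $\cho=\cho$) and $p=q$ (where $\sigma=1$ and one invokes the stated identity $[\cho,\Han]_{[1]}=\Han$), both of which are consistent.
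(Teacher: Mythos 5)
Your proof is correct and takes essentially the same route as the paper's: both derive the outer inclusions $\Han \hookrightarrow \chp \hookrightarrow \mom$ from the standard endpoint embeddings for a nested couple, get continuity of point evaluations from the embedding into the endpoint spaces, and obtain the middle inclusion $\chq \hookrightarrow \chp$ from Calder\'on's reiteration theorem. The only cosmetic difference is that you reiterate between $\cho$ and $\chq$ (using $[\mom,\Han]_{[0]}=\mom$, which rests on the density of $\Han$ in $\mom$ noted before the proposition), whereas the paper reiterates between $\chp$ and $\Han$; your explicit bound $|h(x)|\le k(x,x)\,\|h\|_{\mom}$ merely makes concrete the paper's assertion that point evaluations are continuous on the endpoint spaces.
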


\begin{proof}
  Since $\Han$ is contractively contained in $\mathcal{M} \odot \mathcal{M}$,
  complex interpolation shows that
  \begin{equation*}
    \mathcal{M} \odot \mathcal{M} \supseteq \mathcal{H}^p \supseteq \Han
  \end{equation*}
  with contractive inclusions for all $1 \le p < \infty$.
  In particular, $\mathcal{H}^p$ consists of functions on $X$.
  Since point evaluations are continuous on $\mathcal{M} \odot \mathcal{M}$ and on $\Han$, they are continuous on $\mathcal{H}^p$
  for all $p$. Thus, $\mathcal{H}^p$ is a Banach function space on $X$.
  Finally, the reiteration theorem \ref{reit} shows that interpolating between $\mathcal{H}^p$ and $\Han$,
  we obtain $\mathcal{H}^q$ for $p \le q < \infty$, hence $\mathcal{H}^p \supseteq \mathcal{H}^q \supseteq \Han$
  with contractive inclusions.
\end{proof}

As one would expect, we recover the original Hilbert function space for $p=2$.

\bt
\label{thmb1}
We have $\mathcal{H}^2 = \mathcal{M}$ with equality of norms.
\et
\bp
Assumption (A) implies that $\M$ is dense in $\mom$.
G. Pisier proved in \cite{pis96} that if a Hilbert space $\M$  is densely and continuously contained in
a Banach space $A$, and so $A^\dagger$ embeds in $\mathcal{M}$, then $[ A, A^\dagger]_{[\frac{1}{2}]} = \M$,
with equality of norms.
His proof is in the context of operator spaces; a direct proof of the fact
is given in \cite{wat00}. See also \cite{cs98} for another proof.
\ep

Next, we establish the expected duality between $\mathcal{H}^p$ spaces.

\bt
\label{thm:dual}
For $1 < p < \infty $, we have $(\chp)^\dagger$ is isometrically isomorphic to $\mathcal{H}^q$, where $q$ is the conjugate index to $p$.
The action of $\mathcal{H}^q$ on $\mathcal{H}^p$ is given by the inner product of $\mathcal{H}$ on the common subspace $\Han$.
\et
\bp
By the reiteration theorem \ref{reit}, if we interpolate between $\cho$
and $\cht$ we get $\chp$ for $1 < p < 2$, and if we interpolate between $\cht$ and ${\rm Han}$
we get $\chp$ for $2 < p < \infty$.
Since $\cht$ is reflexive, we have by the duality theorem \cite[Cor. 4.5.2]{bl76} and Theorem \ref{thmb1} that
\[
[ \cho, \cht]_{[\theta]}^\dagger \= [ {\rm Han}, \cht]_{[\theta]}.
\]
(The duality theorem also applies to anti-duals because $[\overline{A_0},\overline{A_1}]_{[\theta]} =
  \overline{[A_0,A_1]_{[\theta]}}$ isometrically).
It is part of the duality theorem that the action of an element of $[\Han, \mathcal{H}^2]_{[\theta]} \subset \mathcal{H}^2$
on an element of the subspace $\mathcal{H}^2 \subset [ \mathcal{H}^1,\mathcal{H}^2]_{[\theta]}$ is given by the inner product
of $\mathcal{H}^2$; see the discussion preceding \cite[Theorem 2.7.4]{pi03}.
This proves the theorem for $1 < p \leq 2$.

In \cite[12.2]{cal64}, Calder\/on proved that if one end point space is reflexive, all the intermediate ones are too.
So this proves the theorem for $2 < p < \infty$.
\ep

 We define ${\rm Han}_0$ by
\[
{\rm Han}_0 \ := \ \{ b \in {\rm Han} : H_b \ {\rm is\ compact} \} .
\]
By \cite[Thm. 2.5]{ahmrWP}, the dual space of ${\rm Han}_0$ is $\mom$.
We think of  ${\rm Han}_0$ as the analogue of VMOA.

By \cite[Thm. 2.1]{ahmrWP}, point evaluations are in ${\rm Han}_0$.
Moreover, since $\M$ is dense in $\mom$, the point evaluations come
from pairing with the kernel functions, so each kernel function is in ${\rm Han}_0$, and
by
 the Hahn-Banach theorem,
 the set of finite linear combinations of kernel functions
is dense in $ {\rm Han}_0$.
\begin{proposition}
\label{proph0}
For $0 < \theta < 1$, we have the isometric equality
\[
[\mom,  {\rm Han}_0]_{[\theta]} \= [\mom,  {\rm Han}]_{[\theta]} .
\]
\end{proposition}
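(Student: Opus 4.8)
The inclusion $\Han_0 \hookrightarrow \Han$ is isometric, so monotonicity of the complex method gives at once the contractive inclusion $[\mom, \Han_0]_{[\theta]} \hookrightarrow [\mom, \Han]_{[\theta]}$; the entire content of the proposition is that this inclusion is onto and isometric. The obstruction to a soft argument is that $\Han_0$ is only weak-$*$, not norm, dense in $\Han$: since $\Han_0^\dagger = \mom$ and $(\mom)^\dagger = \Han$, the space $\Han$ is the \emph{bidual} of $\Han_0$, and $\Han_0$ sits inside it weak-$*$ densely by Goldstine's theorem, the relevant topology being $\sigma(\Han, \mom)$. Thus replacing $\Han$ by $\Han_0$ genuinely changes the inner endpoint, and the statement is one about insensitivity of the complex method to this replacement.

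My plan is to argue by duality, exploiting that $(\mom, \Han_0)$ is a \emph{predual} couple: because $\Han_0^\dagger = \mom$ exactly, the anti-dual couple of $(\mom, \Han_0)$ is the honest couple $(\Han, \mom)$, to which the duality theorem \cite[Cor.~4.5.2]{bl76} can be brought to bear. Write $E_\theta = [\mom, \Han_0]_{[\theta]}$ and $F_\theta = [\mom, \Han]_{[\theta]} = \chp$. By Theorem \ref{thm:dual} I already know $F_\theta$ is reflexive with $F_\theta^\dagger = \chq$, realized through the inner product of $\M$ on the common dense subspace spanned by the kernel functions. The aim is to show $E_\theta^\dagger = \chq$ as well, with the \emph{same} realization; then $E_\theta$ is reflexive, the restriction map $F_\theta^\dagger \to E_\theta^\dagger$ dual to the inclusion is an isometric isomorphism, and bidualizing (both spaces being reflexive) promotes the contractive inclusion $E_\theta \hookrightarrow F_\theta$ to an isometric identification.

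This computation is clean when the \emph{outer} endpoint is reflexive, and I would first establish the companion statement $[\M, \Han_0]_{[s]} = [\M, \Han]_{[s]}$ for $0 < s < 1$: here the endpoint $\M$ is reflexive, so the duality theorem delivers the first Calder\'on method directly, and reiteration off $\cht = \M$ (Theorem \ref{thmb1}) together with Theorem \ref{thm:dual} identifies both anti-duals with one and the same space, namely ${\mathcal H}^{2/(1+s)}$ paired through the inner product of $\M$ — whence the isometric equality by the bidual argument above. The main obstacle is the transfer of this to the couple $(\mom, \Han_0)$, whose endpoint $\mom$ is \emph{not} reflexive: there the duality theorem returns Calder\'on's second (upper) method $[\Han, \mom]^{[\theta]}$ rather than the first method $[\Han, \mom]_{[\theta]} = \chq$, and the crux is to show that these two methods coincide isometrically. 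This is exactly where the weak-$*$ density of $\Han_0$ in $\Han = \Han_0^{\dagger\dagger}$ must be used. Concretely, I would combine the reflexive-endpoint identity with a reiteration argument (in the spirit of Wolff's reiteration theorem) to locate $\M$ as the midpoint $[\mom, \Han_0]_{[1/2]}$ and to obtain reflexivity of the $\Han_0$-scale up to equivalence of norms; reflexivity then forces the upper method to collapse onto the first, and rerunning the duality computation pins down the isometry. I expect the genuinely delicate point to be upgrading the norm equivalence produced by reiteration to the exact isometry asserted, which is where the self-dual structure at $\theta = 1/2$ and Goldstine's theorem must do the decisive work.
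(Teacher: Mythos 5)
Your treatment of the couple $(\M,\Han_0)$ is essentially the paper's own argument: the paper likewise settles $[\M,\Han_0]_{[s]}=[\M,\Han]_{[s]}$ for $0<s<1$ by applying the duality theorem twice (legitimately, because the endpoint $\M$ is reflexive, so \cite[Cor.~4.5.2]{bl76} returns the lower method), invoking Calder\'on's reflexivity theorem, and then checking that the resulting isometric isomorphism is the identity; your bookkeeping identifying the common anti-dual as $\mathcal{H}^{2/(1+s)}$ is correct. The genuine gap is in how you reach the midpoint identity $[\mom,\Han_0]_{[1/2]}=\M$. Wolff-type reiteration yields only an equivalence of norms, and no reiteration formalism upgrades an isomorphic identification to the isometric one asserted here --- Theorem \ref{reit} is exact only for spaces that are already exact interpolation spaces of the given couple, which is precisely what is in question. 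Your closing appeal to ``the self-dual structure at $\theta=1/2$ and Goldstine's theorem'' is a placeholder, not an argument; what actually fills it is a Lions--Peetre-type theorem, namely the Cobos--Schonbek result \cite[Cor.~4.5]{cs98} that the paper quotes: for the couple formed by $\mom$ and $\Han_0$ (note $(\Han_0)^\dagger\cong\mom$, so this is a (dual, predual) couple rather than a (space, dual) couple, which is why the Pisier/Watbled theorem behind Theorem \ref{thmb1} does not apply verbatim), one has $[\mom,\Han_0]_{[1/2]}=\M$ isometrically. Once this single isometric identity is in hand, the proposition follows without ever dualizing the non-reflexive couple $(\mom,\Han_0)$: for $0<\theta\le 1/2$ reiterate against $[\mom,\Han]_{[1/2]}=\M$ (Theorem \ref{thmb1}), and for $1/2<\theta<1$ reiterate down to the companion identity on $(\M,\Han_0)$, as the paper does.

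Two further steps in your outline would fail as written. First, ``reflexivity then forces the upper method to collapse onto the first'' is not a quotable theorem: the lower-method space embeds isometrically into the upper-method space (Bergh), and equality requires extra hypotheses such as a reflexive \emph{endpoint} of the couple being dualized --- which $(\Han,\mom)$ does not have --- rather than reflexivity of the intermediate lower-method space. Second, your bidual argument for $E_\theta=[\mom,\Han_0]_{[\theta]}$ needs the canonical image of $E_\theta$ to be dense in $\chp$, i.e.\ density of $\Han_0$ (equivalently, of the kernel functions) in $\chp$; but for $p>2$ the paper obtains exactly this statement (Corollary \ref{cor:dense}) as a \emph{consequence} of Proposition \ref{proph0}, so assuming it here is circular. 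Both difficulties disappear once the proof is organized around the Cobos--Schonbek midpoint identity, since then every duality step takes place on a couple with the reflexive endpoint $\M$.
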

\bp
Since $\Han_0$ and $\Han$ are contained in $\mathcal{M} \odot \mathcal{M}$, so are the interpolation spaces
in the statement.
By \cite[Cor. 4.5]{cs98}, we have isometrically
\[
[\mom,  {\rm Han}_0]_{[\frac{1}{2}]} \= \M .
\]
Therefore the reiteration theorem and Theorem \ref{thmb1} prove the result for $0 < \theta \leq \frac{1}{2}$.
It remains to prove that
\[
[\M,  {\rm Han}_0]_{[s]} \=
[\M,  {\rm Han}]_{[s]}
\]
for $0 < s < 1$. But applying the duality theorem twice we get
an isometric isomorphism
\[
[\M,  {\rm Han}_0]_{[s]}^{**} \cong
[\M,  {\rm Han}]_{[s]} ,
\]
and by Calder\'on's reflexivity theorem again, we
have $[\M,  {\rm Han}_0]_{[s]}$ is reflexive for $0 \leq s < 1$, so we are done.
Using the fact that the inclusion $\Han_0 \subseteq \Han \cong (\Han_0)^{**}$ agrees
with the canonical embedding into the bidual as well as the particular form of the duality
in the duality theorem, one checks that the resulting isometric isomorphism
$[\mathcal{M},\Han]_{[s]} \cong [\mathcal{M}, \Han_0]_{[s]}$ is in fact the identity.
\ep


\begin{corollary}
  \label{cor:dense}
  The linear span of kernel functions is dense in $\mathcal{H}^p$ for $1 \le p < \infty$.
\end{corollary}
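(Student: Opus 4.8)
The plan is to reduce the statement to two facts already in hand: that finite linear combinations of kernel functions are dense in $\Han_0$ (recorded in the text preceding Proposition \ref{proph0}), and that $\Han_0$ is dense in $\chp$. For the range $1 < p < \infty$ I would argue as follows. By Proposition \ref{proph0}, for $\theta = \frac{p-1}{p} \in (0,1)$ we have the isometric identity
\[
\chp = [\mom, \Han]_{[\theta]} = [\mom, \Han_0]_{[\theta]} .
\]
The fundamental density property of the complex method \cite[Thm. 4.2.2]{bl76} asserts that for a compatible couple $(X_0,X_1)$ and $0 < \theta < 1$ the intersection $X_0 \cap X_1$ is dense in $[X_0,X_1]_{[\theta]}$. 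Applying this with $X_0 = \mom$ and $X_1 = \Han_0$, and noting that $\Han_0 \subseteq \mom$ forces $\mom \cap \Han_0 = \Han_0$, I conclude that $\Han_0$ is dense in $\chp$.

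Next I would transfer density through continuous inclusions. Since $\Han_0$ carries the norm of $\Han$ and $\Han \hookrightarrow \chp$ contractively by Proposition \ref{prop:basic_inclusion}, the inclusion $\Han_0 \hookrightarrow \chp$ is continuous. Composing the two density statements across this continuous inclusion — given $h \in \chp$, first approximate it in the $\chp$-norm by some $b \in \Han_0$, then approximate $b$ in the $\Han_0$-norm by a finite linear combination of kernels, and use continuity of the inclusion to control the resulting $\chp$-error — yields density of the span of kernel functions in $\chp$ for all $1 < p < \infty$.

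It remains to treat the endpoint $p = 1$, which is the only case not covered by the interior-density argument since $\cho = \mom$ is not an interpolation space with $0 < \theta < 1$. Here I would route through $\M$ instead: the span of kernel functions is dense in $\M$ (a standard fact, since any element of $\M$ orthogonal to every $k_x$ vanishes at each point and is therefore $0$), while assumption (A) guarantees that $\M$ is dense in $\mom$, as used in the proof of Theorem \ref{thmb1}. Because $\M = \cht \subseteq \mom = \cho$ contractively by Proposition \ref{prop:basic_inclusion}, the same ``density through a continuous inclusion'' argument gives density of the span of kernel functions in $\cho$.

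The only genuinely delicate point is the passage for $p > 2$: one cannot simply invoke density of kernels in $\M$ together with density of $\M$, because $\M \not\subseteq \chp$ in that range. The substitute is Proposition \ref{proph0}, which lets me run the interpolation-density argument with the smaller endpoint $\Han_0$ — in which kernels are known to be dense — in place of $\Han$. Everything else is a routine triangle-inequality estimate.
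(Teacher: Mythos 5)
Your proof is correct and follows essentially the same route as the paper's: Proposition \ref{proph0} together with the density of $A_0 \cap A_1$ in the complex interpolation spaces \cite[Theorem 4.2.2]{bl76} yields density of $\Han_0$ in $\chp$, after which the density of kernel combinations in $\Han_0$ is transferred through the continuous inclusion $\Han_0 \subseteq \chp$. The only (harmless) difference is at the endpoint $p=1$, where you route through the density of $\M$ in $\mom$ rather than invoking Proposition \ref{proph0} — a sensible precaution, since that proposition is stated only for $0 < \theta < 1$, a point the paper's one-line claim for $1 \le p < \infty$ glosses over.
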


\begin{proof}
  It is a general result about complex interpolation that for a compatible couple of Banach spaces $(A_0,A_1)$,
    the intersetion $A_0 \cap A_1$ is dense in the intermediate interpolation spaces; see \cite[Theorem 4.2.2]{bl76}.
  Thus, Proposition \ref{proph0}  implies that $\Han_0$ is dense in $\mathcal{H}^p$ for $1 \le p < \infty$,
  In turn, finite linear combinations of kernels are dense in $\Han_0$ and the inclusion $\Han_0 \subset \mathcal{H}^p$
  is continuous.
\end{proof}

We can now show pointwise estimates that are valid in all reproducing kernel Hilbert spaces satisfying assumption (A).

\begin{proposition}
  \label{propb2}
  Let $1 < p \le 2$ and let $\frac{1}{p} + \frac{1}{q} = 1$. Then for all $x \in X$,
  \begin{enumerate}[label=\normalfont{(\alph*)}]
    \item $\| \delta_x\|_{(\cH^1)^*} = \| k_x\|_{{\rm Han}} = k(x,x)$.
    \item $\| \delta_x\|_{(\cH^p)^*} = \|k_x\|_{\cH^q} \le k(x,x)^{1/p}$,
    \item $ \| \delta_x \|_{(\cH^q)^*} = \| k_x\|_{\cH^p} \ge k(x,x)^{1/q}$,
  \end{enumerate}
\end{proposition}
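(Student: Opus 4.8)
The plan is to identify the evaluation functional $\delta_x$ with the kernel function $k_x$ under the dualities already in hand, so that every norm in the statement becomes the norm of $k_x$ in one of the spaces $\Han$, $\chq$, $\chp$. Since kernel functions lie in $\Han_0 \subseteq \Han$, which is the common subspace carrying the pairing of Theorem \ref{thm:dual}, and since the reproducing property gives $\la k_x, f\ra_\M = \overline{f(x)}$ for $f \in \M$, the conjugate-linear functional attached to $k_x$ coincides up to conjugation with $\delta_x$. As conjugation is isometric, this yields all three equalities at once: $\|\delta_x\|_{(\cho)^*} = \|k_x\|_{\Han}$, $\|\delta_x\|_{(\chp)^*} = \|k_x\|_{\chq}$, and $\|\delta_x\|_{(\chq)^*} = \|k_x\|_{\chp}$. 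What remains is the three numerical estimates.

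I would establish (a) by a direct computation in the weak product. For the upper bound, if $h = \sum_n f_n g_n$ with $\sum_n \|f_n\|_\M \|g_n\|_\M \le 1$, then using $|f(x)| = |\la f, k_x\ra| \le \|f\|_\M \, k(x,x)^{1/2}$ we get $|h(x)| \le \sum_n |f_n(x)|\,|g_n(x)| \le k(x,x) \sum_n \|f_n\|_\M\|g_n\|_\M \le k(x,x)$. For the matching lower bound I test against $h = k_x^2/k(x,x) = (k_x/k(x,x)^{1/2})^2$, which has $\|h\|_{\mom} \le 1$ and $h(x) = k(x,x)$. Hence $\|\delta_x\|_{(\cho)^*} = k(x,x) = \|k_x\|_{\Han}$.

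For the inequality in (b) I would interpolate. Recording $\|k_x\|_\M = k(x,x)^{1/2}$ and, from (a), $\|k_x\|_{\Han} = k(x,x)$, I use the reiteration theorem (Theorem \ref{reit}) and $\cht = \M$ to write $\chq = [\M, \Han]_{[s]}$, where $s$ is fixed by $(1-s)\tfrac12 + s = 1 - \tfrac1q$, that is $s = 1 - \tfrac2q \in [0,1)$ since $q \ge 2$. Applying the elementary complex-interpolation bound $\|a\|_{[X_0,X_1]_{[s]}} \le \|a\|_{X_0}^{1-s}\|a\|_{X_1}^{s}$ to $a = k_x$ gives
\[
\|k_x\|_{\chq} \le k(x,x)^{(1-s)/2}\, k(x,x)^{s} = k(x,x)^{(1+s)/2} = k(x,x)^{1/p},
\]
because $(1+s)/2 = 1 - 1/q = 1/p$.

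For the lower bound in (c) I would argue by duality rather than interpolation, since the complex method supplies only upper bounds directly. Testing the functional $\delta_x$ against $k_x \in \Han \subseteq \chq$ and invoking (b),
\[
\|k_x\|_{\chp} = \|\delta_x\|_{(\chq)^*} \ge \frac{|\delta_x(k_x)|}{\|k_x\|_{\chq}} = \frac{k(x,x)}{\|k_x\|_{\chq}} \ge \frac{k(x,x)}{k(x,x)^{1/p}} = k(x,x)^{1/q}
\]
(the case $k(x,x)=0$ being trivial). The only delicate points are the bookkeeping ones: verifying that Theorem \ref{thm:dual} really identifies $\delta_x$ with $k_x$ isometrically, and locating the reiteration parameter $s$; once these are in place the estimates follow immediately.
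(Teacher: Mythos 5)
Your proof is correct, and for parts (b) and (c) it follows the paper's argument exactly: the same reiteration step writing $\chq = [\cht, \Han]_{[s]}$ with $s = 1 - 2/q$, the same interpolation inequality $\|k_x\|_{\chq} \le \|k_x\|_{\cht}^{1-s}\|k_x\|_{\Han}^{s}$, and the same duality trick $\|k_x\|_{\chp} = \|\delta_x\|_{(\chq)^*} \ge k(x,x)/\|k_x\|_{\chq}$ for the lower bound. The one genuine divergence is in (a): the paper simply observes that $H_{k_x}$ is the rank-one operator $f \mapsto \langle f, k_x \rangle \overline{k_x}$, whose operator norm is $\|k_x\|_{\M}^2 = k(x,x)$, which is by definition $\|k_x\|_{\Han}$; you instead compute $\|\delta_x\|_{(\cho)^*}$ directly in the weak product, using Cauchy--Schwarz on each summand for the upper bound and the explicit extremal function $k_x^2/k(x,x)$ for the lower bound. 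Both routes are short; yours has the mild advantage of not needing the concrete description of Hankel operators (only the isometric duality $(\cho)^\dagger \cong \Han$) and of exhibiting a norming function, while the paper's is immediate from the Hankel formalism. Two bookkeeping remarks: the degenerate case $k(x,x)=0$, which you note in (c), also needs excluding in your lower-bound test in (a) (there $k_x = 0$ and everything vanishes, so it is equally trivial); and your identification of $\delta_x$ with $k_x$ on all of $\chp$, rather than just on the common subspace $\Han$, rests on density of $\Han$ (or of the kernel functions, Corollary \ref{cor:dense}) in $\chp$ together with continuity of point evaluations there --- exactly the ``bookkeeping'' point you flag, and it does go through.
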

\begin{proof}
  For each item, the first equality follows from duality; see Theorem \ref{thm:dual} and the discussion at the beginning of the section.

  (a) This follows from the fact that the Hankel operator with symbol $k_x$ is the rank
  one operator given by $H_{k_x} (f) = \langle f, k_x \rangle \ol{k_x}$.

  (b)
  Let $\theta = \frac{q-1}{q}$. By reiteration, $\mathcal{H}^q = [ \mathcal{H}^1, \Han]_{\theta}
  = [\mathcal{H}^2, \Han]_{s}$, where $s = 2 \theta - 1 = 1 - \frac{2}{q}$.
  Since $\|k_x\|_{\cH^2} = k(x,x)^{1/2}$ and $\|k_x\|_{{\rm Han}} = k(x,x)$ by part (a),
  interpolation therefore yields
  \begin{equation*}
    \|k_x\|_{\mathcal{H}^q} \le \|k_x\|^{1-s}_{\mathcal{H}^2} \|k_x\|_{\Han}^s = k(x,x)^{1 - 1/q}
  \end{equation*}
  for $2 \le q < \infty$.

  (c) By part (b),
  \begin{equation*}
    \|\delta_x\|_{(\cH^q)^*}
    \ge \frac{k(x,x)}{ \|k_x\|_{\cH^q}} \ge k(x,x)^{1/q}. \qedhere
  \end{equation*}
  \end{proof}

In Section~\ref{secc} we shall prove that the estimates are sharp (up to a constant) in complete Pick spaces.

\section{Complete Pick Spaces}
\label{secc}

Pick's theorem \cite{pi16} gives necessary and sufficient conditions to solve an interpolation problem
in the multiplier algebra of $H^2$ (which is $H^\infty$). It generalizes to matrix-valued functions.
A Hilbert space in which this matrix-valued Pick theorem is true is called a complete Pick space (see the next paragraph for a formal definition). Examples include the Dirichlet space \cite{marsun}, the Sobolev space \cite{ag90b}, and various
Besov spaces on the ball \cite{ahmrRad}; see also \cite{ampi}.

If $\Lambda \subseteq X$, we define $\M_\Lambda$ to be the closed linear span
of the kernel functions $\{ k_\lambda : \lambda \in \Lambda \}$, and let $P$ be the orthogonal projection from $\M$ onto $\M_\Lambda$.
Define $\pi : \Mult (\M ) \to B(\M_\Lambda)$ by
$\pi( \phi) = P M_\phi P$, where $M_\phi$ is multiplication by $\phi$.
Then $\pi$ is always a contractive homomorphism. If it is an exact quotient map
(i.e. it maps the closed unit ball onto the closed unit ball), then $\M$ is said to have the Pick property.
If it is a complete exact quotient map (the induced map on matrices is always an exact quotient map),
then $\M$ is said to have the complete Pick property.

The Drury-Arveson space is the Hilbert function space on the open unit ball $\B_d$ in $\C^d$ or $\ell^2(d)$ with kernel
\[
S(z,w) \= \frac{1}{1 - \la z, w \ra } .
\]
We say a reproducing kernel Hilbert space $\M$ on $X$ is
{\em normalized} if for some choice of base-point $x_0$, we have
$k(x_0, y) = 1$ for all $y$.

The Drury-Arveson space is a normalized space with the complete Pick property,
and every normalized
space with the complete Pick property can be embedded in it \cite{agmc_cnp}, in the sense that there is a function $b: X \to \B_d$ for
some cardinal $d$ so that
\[
k(x,y) \= S(b(x), b(y) ).
\]
Normalized complete Pick spaces always satisfy assumption (A), as the kernel functions are multipliers.
We shall prove that for complete Pick spaces, the inequalities in Proposition \ref{propb2} are equivalences.
For a kernel $k$, let us write $\chp(k)$ to denote the space in \eqref{eqa2} corresponding to the reproducing kernel Hilbert
space with kernel $k$ (which will be called $\cht(k)$ in this notation).
We also write $\Han(k)$ in place of $\Han$ when we need to specify the kernel.

\begin{remark}
In general, for any complete Pick space $\M$, we have
\begin{equation}
\label{eqxm2}
{\rm Han}(\M) =
{\mathcal X}(\M)  ,
\end{equation}
where ${\mathcal X}(\M)$ is defined in Equation \eqref{eqxm}.
Indeed, equality \eqref{eqxm2} was proved in \cite[Thm 2.6]{ahmrWP} under the
hypothesis that $\M$ has the column-row property, and recently in
\cite{Hartz20} it was shown that this property holds in all complete Pick spaces.
  \end{remark}

\begin{lemma}
  \label{lem:comp}
  Let $k,\ell$ be reproducing kernels on $X,Y$ respectively, and let $\varphi: Y \to X$ be a function.
  If the composition operator
  \begin{equation*}
    C_\varphi: \cht(k) \to \cht(\ell), \quad f \mapsto f \circ \varphi,
  \end{equation*}
  is well defined and bounded, then $C_\varphi$ also maps $\chp(k)$ to $\chp(\ell)$ for $1 \le p \le 2$
  and
  \begin{equation*}
    ||C_\varphi||_{\chp(k) \to \chp(\ell)} \le ||C_\varphi||_{\cht(k) \to \cht(\ell)}^{2/p}.
  \end{equation*}
\end{lemma}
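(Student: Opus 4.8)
The plan is to prove the composition operator is bounded on the endpoint spaces and then invoke interpolation. The key observation is that $C_\varphi$ maps $\cht(k)$ to $\cht(\ell)$ with some norm $M := \|C_\varphi\|_{\cht(k)\to\cht(\ell)}$ by hypothesis, and that the $\chp$ spaces are built by complex interpolation between $\cH^1 = \M \odot \M$ and $\Han$. Since $\cH^1(k) = \cht(k) \odot \cht(k)$ is the weak product, I would first show that $C_\varphi$ is bounded from $\cH^1(k)$ to $\cH^1(\ell)$ with norm at most $M^2$. Given $h = \sum_n f_n g_n \in \cH^1(k)$, we have $h \circ \varphi = \sum_n (f_n \circ \varphi)(g_n \circ \varphi)$, and each factor $f_n \circ \varphi = C_\varphi f_n$ lies in $\cht(\ell)$ with $\|C_\varphi f_n\|_{\cht(\ell)} \le M \|f_n\|_{\cht(k)}$. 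Taking the infimum over representations gives
\begin{equation*}
\|h \circ \varphi\|_{\cH^1(\ell)} \le \sum_n \|C_\varphi f_n\|_{\cht(\ell)} \|C_\varphi g_n\|_{\cht(\ell)} \le M^2 \sum_n \|f_n\|_{\cht(k)} \|g_n\|_{\cht(k)},
\end{equation*}
so that $\|C_\varphi\|_{\cH^1(k) \to \cH^1(\ell)} \le M^2$.

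With the two endpoints $p = 1$ and $p = 2$ controlled (norm at most $M^2$ and $M$ respectively), I would then apply the interpolation property of the complex method. Recall from Proposition \ref{prop:basic_inclusion} and the reiteration theorem that for $1 \le p \le 2$ we have $\chp = [\cH^1, \cht]_{[s]}$ for an appropriate $s \in [0,1]$; concretely, writing $\theta = \frac{p-1}{p}$ for $\chp = [\cH^1, \Han]_{[\theta]}$, reiteration between $\cH^1$ and $\cht$ gives $\chp = [\cH^1, \cht]_{[t]}$ where $t$ is chosen so that the exponent matches, and a direct computation yields $t = \frac{2}{p} \cdot \frac{p-1}{1} = \frac{2(p-1)}{p}$, i.e. $1 - t = \frac{2-p}{p} = \frac{2}{p} - 1$. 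The key point is simply that $C_\varphi$ is a single linear map acting compatibly on the couple $(\cH^1(k), \cht(k))$ and landing in the couple $(\cH^1(\ell), \cht(\ell))$, so the complex interpolation functor applies and gives
\begin{equation*}
\|C_\varphi\|_{\chp(k) \to \chp(\ell)} \le \|C_\varphi\|_{\cH^1(k) \to \cH^1(\ell)}^{1-t} \, \|C_\varphi\|_{\cht(k) \to \cht(\ell)}^{t} \le (M^2)^{1-t} M^{t} = M^{2 - t}.
\end{equation*}

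It then remains to check that $2 - t = 2/p$, which pins down the correct value of $t$ and hence the correct interpolation pair. With $\theta = \frac{p-1}{p}$ and the reiteration identity between $\cH^1$ and $\cht = [\cH^1, \Han]_{[1/2]}$, solving $\frac{1}{2} t = \theta$ gives $t = 2\theta = \frac{2(p-1)}{p}$, whence $2 - t = 2 - \frac{2(p-1)}{p} = \frac{2}{p}$, exactly as claimed. I expect the main obstacle to be purely bookkeeping: verifying that the interpolation parameters align correctly so that the exponent on $M$ comes out to $2/p$, and confirming that $C_\varphi$ genuinely defines a morphism of compatible couples. The latter requires that the two couples $(\cH^1, \cht)$ for $k$ and for $\ell$ embed into ambient linear spaces in which $C_\varphi$ is a well-defined linear map; since both couples consist of functions on $X$ (resp. $Y$) and $C_\varphi$ is composition with $\varphi$, this compatibility is automatic once we know $C_\varphi$ is bounded on $\cht$ and on $\cH^1$, which the first paragraph establishes.
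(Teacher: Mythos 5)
Your proposal is correct and follows essentially the same route as the paper: the paper likewise reduces to $p=1$ by splitting a representation $f=\sum_n g_n h_n$ factor by factor to get the bound $\|C_\varphi\|_{\cho(k)\to\cho(\ell)}\le M^2$, and then obtains the case $1<p<2$ by complex interpolation between $\cho$ and $\cht$ (together with the same remark you make, that the interpolated operator still acts by composition since the couple is nested inside $\cho$). Your explicit reiteration bookkeeping $t=\tfrac{2(p-1)}{p}$, giving $(M^2)^{1-t}M^t=M^{2/p}$, is exactly the computation the paper leaves implicit.
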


\begin{proof}
  It suffices to show the statement for $p=1$. The desired result then follows  from
  complex interpolation (and the observation that a bounded operator between two Banach function
  spaces that acts by composition on a dense subset acts by composition everywhere).

  To show the statement for $p=1$, let
  $f \in \cho(k)$ with $\|f\|_{\mathcal{H}^1(k)} < 1$. Then there exist $g_n,h_n \in \cht(k)$ so that
  $f = \sum_{n} g_n h_n$ and
  \begin{equation*}
    \sum_{n} ||g_n||_{\cht(k)} \, ||h_n||_{\cht(k)} < 1.
  \end{equation*}
  Hence
  \beq
    ||f \circ \varphi||_{\cho(\ell)} &\ \le \ & \sum_n ||g_n \circ \varphi||_{\cht(\ell)} \, ||h_n \circ \varphi||_{\cht(\ell)} \\
   & \le &||C_\varphi||^2_{\cht(k) \to \cht(\ell)},
  \eeq
  which completes the proof.
\end{proof}

The following lemma shows that the classical $H^p$ spaces on the disc can be embedded into
the $\chp$ spaces corresponding to the Drury--Arveson space. In particular, this gives
concrete examples of functions in these spaces on the ball.
In the sequel, let $s$ denote the Szeg\H{o} kernel on the disc and let $S$ be the Drury--Arveson
kernel on $\mathbb{B}_d$.

\begin{lemma}
  \label{lem:Hp_embedding}
  Let $d$ be a cardinal number, let $v \in \ell^2(d)$ be a unit vector and let
  \begin{equation*}
    Q: \ell^2(d) \to \C, \quad z \mapsto \langle z,v \rangle.
  \end{equation*}
  Then for every $1 \le p < \infty$, the map
  \begin{equation*}
 \chp(s) \to \chp(S), \quad f \mapsto f \circ Q,
  \end{equation*}
  is an isometry onto a complemented subspace of $\chp(S)$.
  Similarly, the map
  \begin{equation*}
    \Han(s) \to \Han(S), \quad f \mapsto f \circ Q,
  \end{equation*}
  is an isometry onto a complemented subspace of $\Han(S)$.
\end{lemma}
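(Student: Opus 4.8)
The plan is to realize both maps as restrictions of a single pair of mutually adjoint composition operators at the Hilbert space level, and then to propagate the relevant structure to the whole scale by interpolation and duality. First I would record two elementary facts about $Q$. Since $\|v\| = 1$, Cauchy--Schwarz gives $|Q(z)| \le \|z\| < 1$ for $z \in \bB_d$, so $Q$ maps $\bB_d$ into $\bD$ and $f \circ Q$ is a well-defined function on $\bB_d$ whenever $f$ is a function on $\bD$. Moreover the isometric embedding $V : \bD \to \bB_d$, $V(\lambda) = \lambda v$, satisfies $Q \circ V = \operatorname{id}_{\bD}$ together with the kernel identity $S(V(\lambda),V(\mu)) = \frac{1}{1 - \lambda \bar\mu \|v\|^2} = s(\lambda,\mu)$.

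This kernel identity is the engine of the proof. By the standard pullback principle for reproducing kernels, the composition operator $C_V : \cht(S) \to \cht(s)$, $g \mapsto g \circ V$, is a co-isometry, and its adjoint $C_V^* : \cht(s) \to \cht(S)$ is an isometry determined on kernels by $C_V^* s_\mu = S_{V(\mu)}$. A short computation using $S_z \circ V = s_{Q(z)}$ shows that $C_V^*$ acts precisely by $f \mapsto f \circ Q$; thus the map in the statement is $C_V^*$ at $p = 2$, it is isometric, and $C_V C_V^* = \operatorname{id}$. This last identity is in fact the pointwise tautology $(f \circ Q) \circ V = f \circ (Q \circ V) = f$, and hence it holds on every function space in sight once the maps are known to be bounded there.

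Next I would establish boundedness of $C_Q$ and $C_V$ at the two endpoints of the scale. Since $C_Q : \cht(s) \to \cht(S)$ and $C_V : \cht(S) \to \cht(s)$ are contractions, Lemma \ref{lem:comp} (applied to $\varphi = Q$ and to $\varphi = V$) shows that both are contractions on the $\cho$ level. For the $\Han$ endpoint I would pass to anti-adjoints: using the anti-duality $\Han = (\cho)^\dagger$, whose pairing is induced by the $\cht$ inner product, together with $C_Q = C_V^*$ on $\cht$, the anti-adjoint of $C_Q : \cho(s) \to \cho(S)$ is exactly $C_V : \Han(S) \to \Han(s)$, and symmetrically the anti-adjoint of $C_V : \cho(S) \to \cho(s)$ is $C_Q : \Han(s) \to \Han(S)$. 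Hence both operators are contractions on $\Han$ as well. I expect this duality bookkeeping---correctly identifying that the anti-adjoint of a composition operator is again the companion composition operator---to be the one genuinely delicate step.

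With contractivity in hand at both $\cho$ and $\Han$, complex interpolation makes $C_Q$ and $C_V$ contractions on $\chp(S) = [\cho, \Han]_{[\theta]}$ for every $1 \le p < \infty$, and we already have them on $\Han$ itself. On each of these spaces the tautology $C_V C_Q = \operatorname{id}$ gives
\[
\|f\| = \|C_V C_Q f\| \le \|C_Q f\| \le \|f\|,
\]
so that $C_Q$ is isometric; and $P := C_Q C_V$ is a bounded idempotent ($P^2 = C_Q (C_V C_Q) C_V = P$) whose range is exactly the image of $C_Q$, exhibiting that image as a complemented subspace. Applying this on $\chp(S)$ and on $\Han(S)$ yields both assertions of the lemma.
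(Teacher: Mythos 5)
Your proposal is correct and follows essentially the same route as the paper's proof: contractivity of both composition operators on $\cho$ via Lemma \ref{lem:comp}, the $\Han$ endpoint via the anti-duality $(\cho)^\dagger \cong \Han$ (your adjoint bookkeeping is precisely the paper's explicit computation $\langle f, h \circ Q \rangle_{H^2_d} = \langle f \circ i, h \rangle_{H^2}$ together with the contractive estimate), complex interpolation for the intermediate spaces, and the identity $C_V C_Q = \operatorname{id}$ to deduce isometry and that $C_Q C_V$ is a bounded projection onto the range. The only difference is cosmetic: you phrase the delicate endpoint step abstractly as identifying anti-adjoints of composition operators, while the paper verifies the same relation by a direct inequality chain.
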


\begin{proof}
  Since $V: f \mapsto f \circ Q$ is an isometry from $H^2$ into $H^2_d$, it follows from Lemma \ref{lem:comp}
  that it is also a contraction from $\mathcal{H}^1(s)$ into $\mathcal{H}^1(S)$.
  Consider the embedding
  \begin{equation*}
    i: \bD \to \bB_d, \quad \lambda \mapsto \lambda v.
  \end{equation*}
  Then $R: f \mapsto f \circ i$, being the adjoint of $V$, is a co-isometry from $\cht_d$ onto $H^2$.
  Applying Lemma \ref{lem:comp} again, we find that $R$ is a contraction from $\mathcal{H}^1(S)$ into $\mathcal{H}^1(s)$.

  Next, we use duality to prove the statement about $\Han$.
  Let $h \in \Han(s)$ and let $f \in H^2_d \subset H^2_d \odot H^2_d$.
  Then
\beq
    |\langle f, h \circ Q \rangle_{H^2_d}| & \ = \ & |\langle f \circ i, h \rangle_{H^2}| \\
    &\le & ||f \circ i||_{H^2 \odot H^2} \, ||h||_{\Han(s)}\\
  &  \le & ||f||_{H^2_d \odot H^2_d} \, ||h||_{\Han(s)}.
\eeq
  Thus, $h \circ Q$ induces a bounded functional
  on $H^2_d \odot H^2_d$ of norm at most $||h||_{\Han(s)}$, so that $h \circ Q \in \Han(S)$ and the
  map $V: h \mapsto h \circ Q$ is a contraction from $\Han(s)$ to $\Han(S)$.
  Similarly, one checks that $R: f \mapsto f \circ i$ is a contraction from $\Han(S)$ to $\Han(s)$.

  Interpolation therefore shows that $V: h \mapsto h \circ Q$ is a contraction
  from $\mathcal{H}^p(s)$ into $\mathcal{H}^p(S)$ for $1 \le p < \infty$
  and that $R: h \mapsto h \circ i$ is a contraction from $\mathcal{H}^p(S)$ into $\mathcal{H}^p(s)$ for $1 \le p < \infty$.
  Clearly, $R \circ V$ is the identity on $\mathcal{H}^p(s)$ and on $\Han(s)$, hence $V$ is an isometry from $\mathcal{H}^p(s)$
  into $\mathcal{H}^p(S)$ and from $\Han(s)$ to $\Han(S)$.
  Moreover, $V \circ R$ is a projection onto the range of $V$, hence that space is complemented.
\end{proof}

Let $\hha$ denote the half-plane $\{ z \in \C : \Re(z) > \frac{1}{2} \}$.
It is the image of the unit disk under the map $ \zeta \mapsto \frac{1}{1 - \zeta}$.

\begin{lemma}
  \label{cor:kernel_da}
Let $1 \leq p < \infty$, let $w \in \bB_d$ and let $r = ||w||$. If $h$ is an analytic
  function on $\hha$ such that $h \circ s_r \in \chp(s)$, then $h \circ S_w \in \chp(S)$ and
  \begin{equation*}
    ||h \circ S_w||_{\chp(S)}
    =  ||h \circ s_r||_{\chp(s)}.
  \end{equation*}
\end{lemma}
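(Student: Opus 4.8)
The plan is to reduce the claim to the isometry of Lemma \ref{lem:Hp_embedding} by recognizing the Drury--Arveson kernel function $S_w$ as a composition of the Szeg\H{o} kernel function $s_r$ with the linear coordinate functional $Q$.

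First I would write $w = r v$ for a unit vector $v \in \ell^2(d)$ (when $w = 0$ any unit vector will do), and set $Q(z) = \langle z, v \rangle$ as in Lemma \ref{lem:Hp_embedding}. Since $\langle z, w \rangle = r \langle z, v \rangle = r\, Q(z)$ and $r \ge 0$ is real, the kernel functions are related by
\[
  S_w(z) = \frac{1}{1 - \langle z, w \rangle} = \frac{1}{1 - r\, Q(z)} = s_r(Q(z)),
\]
so that $S_w = s_r \circ Q$ and hence $h \circ S_w = (h \circ s_r) \circ Q$ as functions on $\bB_d$.

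Next I would record that all compositions are well defined: $Q$ maps $\bB_d$ into $\bD$ because $|Q(z)| \le \|z\| < 1$, while both $s_r$ and $S_w$ take values in $\hha$ (for $S_w$ this is exactly the statement that $\zeta \mapsto \frac{1}{1-\zeta}$ carries $\bD$ into $\hha$, applied to $\zeta = \langle z, w \rangle$), so that $h \circ s_r$ and $h \circ S_w$ make sense. By hypothesis $h \circ s_r \in \chp(s)$, and Lemma \ref{lem:Hp_embedding} asserts that $f \mapsto f \circ Q$ is an isometry of $\chp(s)$ into $\chp(S)$. Applying this isometry to $f = h \circ s_r$ gives
\[
  \|h \circ S_w\|_{\chp(S)} = \|(h \circ s_r)\circ Q\|_{\chp(S)} = \|h \circ s_r\|_{\chp(s)},
\]
which is the asserted equality.

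Because the substantive content, namely the isometry $f \mapsto f \circ Q$, is already in hand, I do not anticipate a genuine obstacle: the only real step is spotting the factorization $S_w = s_r \circ Q$. The lone point needing a word of care is the degenerate case $w = 0$, where $r = 0$ and both sides collapse to the constant $h(1)$, so that the choice of $v$ is immaterial.
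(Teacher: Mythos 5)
Your proof is correct and matches the paper's own argument essentially verbatim: the same choice of unit vector $v$ with $rv = w$, the same factorization $S_w = s_r \circ Q$, and the same application of the isometry $f \mapsto f \circ Q$ from Lemma \ref{lem:Hp_embedding} to $f = h \circ s_r$. Your added remarks on well-definedness and the degenerate case $w = 0$ are harmless elaborations of points the paper leaves implicit.
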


\begin{proof}
  Let $v$ be a unit vector in $\ell^2(d)$ such that $r v = w$
  (i.e. $v = w / r$ if $r \neq 0$ and $v$ is an arbitrary unit vector if $r = 0$) and let
  $Q(z) = \langle z,v \rangle$.
  We apply the isometry of Lemma \ref{lem:Hp_embedding} to $f = h \circ s_r$. Since
  \begin{equation*}
    (s_r \circ Q)(z) = \frac{1}{1 - r \langle z,v \rangle} = S_w(z),
  \end{equation*}
  we have $h \circ s_r \circ Q = h \circ S_w$, and the result follows from Lemma \ref{lem:Hp_embedding}.
\end{proof}

Using universality of the Drury--Arveson space, we can also construct explicit examples of functions in $\mathcal{H}^p(k)$
for complete Pick kernels $k$, at least for $1 \le p \le 2$.

\begin{proposition}
  \label{prop:kernel_containment}
  Let $k$ be a normalized complete Pick kernel on $X$ and let $1 \le p \le 2$. Let $x \in X$
  and set
  \[
  r \ =\ \sqrt{1 - \frac{1}{k(x,x)}}.
  \]
  If $h$ is an analytic
  function on $\hha$ such that $h \circ s_{r} \in \chp(s)$,
  then $h \circ k_x \in \chp(k)$ and
  \begin{equation*}
    ||h \circ k_x||_{\chp(k)} \le ||h \circ s_{r}||_{\chp(s)}.
  \end{equation*}

  In particular, $k_x^{2/p} \in \chp(k)$
  and $||k_x^{2/p}||_{\chp(k)} \lesssim k(x,x)^{1/p}$ for all $x \in X$, where the implied constant  depends
only on $p$, not on $k$ or $x$.
\end{proposition}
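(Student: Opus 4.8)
The plan is to reduce the whole statement to the Szeg\H{o}/Hardy model via the universal embedding of the Drury--Arveson space, and then to combine the explicit norm identity of Lemma \ref{cor:kernel_da} with a single classical $H^p$ computation.

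First I would use that $k$ is a normalized complete Pick kernel: by the embedding theorem \cite{agmc_cnp} there is a map $b : X \to \mathbb{B}_d$ with $k(x,y) = S(b(x),b(y))$ for all $x,y$. Fix $x$ and put $w = b(x)$. Evaluating at $y=x$ gives $k(x,x) = S(w,w) = (1 - \|w\|^2)^{-1}$, so $\|w\| = \sqrt{1 - 1/k(x,x)} = r$, which matches the hypothesis of Lemma \ref{cor:kernel_da}. Moreover $k_x(y) = k(y,x) = S(b(y),w) = (S_w \circ b)(y)$, i.e. $k_x = S_w \circ b$, whence $h \circ k_x = (h \circ S_w) \circ b$.

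The structural input is that the composition operator $C_b : f \mapsto f \circ b$ is a contraction from $\cht(S)$ to $\cht(k) = \M$. I would check this on kernels: its adjoint sends the reproducing kernel $k_x$ of $\M$ to the Drury--Arveson kernel $S_{b(x)}$, and the identity $k(x,y) = S(b(x),b(y))$ shows that this adjoint preserves inner products of kernel functions, which span a dense subspace of $\M$. Hence $C_b^*$ is isometric and $C_b$ is a co-isometry, in particular a contraction. Lemma \ref{lem:comp} then upgrades this to a contraction $C_b : \chp(S) \to \chp(k)$ for all $1 \le p \le 2$, since $\|C_b\|_{\chp \to \chp} \le \|C_b\|_{\cht \to \cht}^{2/p} \le 1$. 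Feeding in Lemma \ref{cor:kernel_da}, which gives $h \circ S_w \in \chp(S)$ with $\|h \circ S_w\|_{\chp(S)} = \|h \circ s_r\|_{\chp(s)}$, together with $h \circ k_x = C_b(h \circ S_w)$, yields the first assertion
\[
\|h \circ k_x\|_{\chp(k)} = \|C_b(h \circ S_w)\|_{\chp(k)} \le \|h \circ S_w\|_{\chp(S)} = \|h \circ s_r\|_{\chp(s)}.
\]

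For the final statement I would take $h(z) = z^{2/p}$, analytic on $\hha \subset \{\Re z > 0\}$; since $k_x$ takes values in $\hha$, this gives $h \circ k_x = k_x^{2/p}$ and $h \circ s_r = s_r^{2/p} = (1 - r\zeta)^{-2/p}$. It then remains to bound $\|s_r^{2/p}\|_{\chp(s)}$ by a constant times $(1-r^2)^{-1/p} = k(x,x)^{1/p}$. Here I would invoke that $\chp(s)$ is isomorphic to the classical Hardy space $H^p$ with constants depending only on $p$ (the fact cited in the introduction, \cite{fs72}) and compute directly in $H^p$:
\[
\|(1 - r\zeta)^{-2/p}\|_{H^p}^p = \frac{1}{2\pi}\int_0^{2\pi} \frac{d\theta}{|1 - re^{i\theta}|^2} = \frac{1}{1 - r^2},
\]
so that $\|s_r^{2/p}\|_{\chp(s)} \lesssim (1-r^2)^{-1/p} = k(x,x)^{1/p}$ with implied constant depending only on $p$. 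Combined with the displayed inequality this gives $\|k_x^{2/p}\|_{\chp(k)} \lesssim k(x,x)^{1/p}$.

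The main obstacle is the uniform-in-$x$ control of $\|s_r^{2/p}\|_{\chp(s)}$. A purely intrinsic interpolation estimate using the analytic family $s_r^{2(1-z)}$ runs into trouble: on both edges of the strip the natural boundary functions grow like $e^{\pi|t|}$ in the imaginary direction (because $\arg s_r \in (-\pi/2,\pi/2)$), and suppressing this growth with a Gaussian convergence factor degrades the constant as the factor is removed. This is exactly why I would route the fractional-power estimate through the classical $H^p$ norm, where the boundary integral is computed exactly, rather than attempting the interpolation bound directly. The remaining ingredients --- the embedding, the co-isometry property of $C_b$, and the identification $k_x = S_w \circ b$ --- are routine.
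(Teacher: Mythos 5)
Your proposal is correct and takes essentially the same route as the paper's proof: embed $X$ into $\bB_d$ via $b$ with $k_x = S_w \circ b$ and $\|w\| = r$, combine Lemma \ref{cor:kernel_da} with the contractivity of $f \mapsto f \circ b$ from $\chp(S)$ to $\chp(k)$ supplied by Lemma \ref{lem:comp}, and then bound $\|s_r^{2/p}\|_{\chp(s)}$ using the isomorphism $\chp(s) \cong H^p$ together with the exact computation $\frac{1}{2\pi}\int_0^{2\pi} |1-re^{i\theta}|^{-2}\,d\theta = (1-r^2)^{-1} = k(x,x)$. The only difference is cosmetic: you verify the co-isometry property of $C_b$ on $\cht$ by checking its adjoint on kernel functions, whereas the paper simply quotes this known consequence of the embedding theorem.
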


\begin{proof}
There exists a function $b: X \to \bB_d$ for a suitable cardinal $d$ such that $k(x,y) = S(b(x),b(y))$
  and so that $f \mapsto f \circ b$ is a co-isometry from $H^2_d$ onto $\cht(k)$.
  Let $w = b(x)$ and note that $r = ||w||$.
  In this setting, Lemma \ref{cor:kernel_da} implies that $h \circ S_w \in \chp(S)$ with
  $||h \circ S_w||_{\chp(s)} = ||h \circ s_r||_{\chp(s)}$.
  By Lemma \ref{lem:comp}, the map $f \mapsto f \circ b$ is a contraction from $\chp(S)$ into $\chp(k)$,
  hence $h \circ S_w \circ b \in \chp(k)$ with $||h \circ S_w \circ b||_{\chp(k)} \le ||h \circ s_r||_{\chp(s)}$.
  Since
  \begin{equation*}
    (S_w \circ b)(y) = \frac{1}{1 - \langle b(y),w \rangle} = k_x(y)
  \end{equation*}
  for each $y \in X$, we have $S_w \circ b = k_x$, which completes the proof of the first statement.

  To prove the additional statement, we let $h(\lambda) = \lambda^{2/p}$.
  Then it follows from the fact that $\chp(s) = H^p$ isomorphically that
  \begin{equation}
  \label{eq4}
    ||s_r^{2/p}||_{\chp(s)} \approx ||s_r||_{H^2}^{2/p} = s(r,r)^{1/p},
  \end{equation}
  so by the first paragraph, $k_x^{2/p} \in \chp(k)$ and
  \begin{equation*}
    ||k_x^{2/p}||_{\chp(k)} \lesssim s(r,r)^{1/p} = k(x,x)^{1/p}. \qedhere
  \end{equation*}
\end{proof}

We can now give asymptotic bounds on $\| \delta_x \|$ for every $1 \leq p \leq \infty$.
For $p=1$, we already proved (b) in Proposition \ref{propb2}; we include it for completeness.
For $p=\infty$ we have $\| \delta_x \|_{{\rm Mult}} = 1$. (Indeed, $1$ is an upper bound because
$M_\phi^* k_x = \overline{\phi(x)} k_x$, and it is attained since the constants are multipliers).

In \cite{arsw11}, Arcozzi, Rochberg, Sawyer and Wick studied the weak product of the Dirichlet space with itself, and for
that space proved (b) and (c) in Theorem \ref{thmc1}, and moreover showed that
$\|k_x \|_{\cho} \approx 1 + \log(k(x,x))$.

\bt
\label{thmc1}
Let $k$ be a normalized complete Pick kernel on $X$. Then for all $1 < p < \infty$
and $\frac{1}{p} + \frac{1}{q} = 1$,
  \begin{enumerate}[label=\normalfont{(\alph*)},resume]
    \item $\| \delta_x\|_{(\chp)^*} = \|k_x\|_{\chq} \approx k(x,x)^{1/p}$,
    \item $\| \delta_x\|_{(\cH^1)^*} = \| k_x\|_{{\rm Han}} = k(x,x)$,
    \item $\| \delta_x \|_{{\rm Han}_0^*} = \|\delta_x\|_{{\rm Han}^*} = \|k_x \|_{\cho} \lesssim 1 + \log(k(x,x))$,
\item for the Drury-Arveson kernel $S$,
 $\| \delta_x \|_{{\rm Han}_0^*}  \approx 1 + \log(S(x,x))$,
  \end{enumerate}
  where the implied constants
do not  depend
 on $k$ or $x$.
\et
\begin{proof}
  The equalities in (a) and (c) follow from the $\mathcal{H}^p$--$\mathcal{H}^q$ duality (Theorem \ref{thm:dual})
  and the $\Han_0$--$\mathcal{H}^1$ and the $\mathcal{H}^1$--$\Han$ dualities.

Suppose first that $1 \le q \le 2$. Then by Proposition \ref{prop:kernel_containment}
and the equality $H^p(s) = H^p$ with equivalent norms,
  \begin{equation}
    \label{eqn:kernel_integral}
    \|k_x\|_{\chq(k)}^q \lesssim \|s_r\|_{H^q}^q
    = \frac{1}{2 \pi} \int_{\T} \frac{1}{|1 - r e^{i t} |^q} \, dt,
  \end{equation}
  where $r = (1 - k(x,x)^{-1})^{1/2}$. The integral above
  behaves like $(1-r^2)^{1-q}$ if $q > 1$ and like $1 + \log( 1/ (1 - r^2))$ if $q=1$; see \cite[Theorem 1.12]{zhu}.
  Hence
  \begin{equation}
    \label{eqn:kernel_integral_estimate}
    \|k_x\|_{\cH^q} \lesssim k(x,x)^{(q-1)/q} = k(x,x)^{1/p}
  \end{equation}
  if $1 < q \le 2$ and
  \begin{equation*}
    \|k_x\|_{\cH^1} \lesssim 1+ \log(k(x,x)).
  \end{equation*}
  This proves (c) and also the inequality $\|k_x\|_{\mathcal{H}^q} \lesssim k(x,x)^{1/p}$
  in (a) if $2 \le p < \infty$.
  The reverse inequality was established in part (c) of Proposition \ref{propb2},
  so (a) holds for $2 \le p < \infty$.

  Next, let $1 < p \le 2$. Part (b) of Proposition \ref{propb2} shows that
  $\|\delta_x\|_{(\mathcal{H}^p)^*} \le k(x,x)^{1/p}$. On the other hand,
  by \eqref{eqn:kernel_integral_estimate}, we have
  \begin{equation*}
    \|\delta_x\|_{(\chp)^*} \ge \frac{k(x,x)}{ \|k_x\|_{\chp}}
    \gtrsim \frac{k(x,x)}{k(x,x)^{1 - 1/p}} = k(x,x)^{1/p},
  \end{equation*}
  so (a) also holds for $1 < p \le 2$.

  Finally, when $k=S$,  Lemma \ref{cor:kernel_da} shows that the estimate in \eqref{eqn:kernel_integral} is actually
  an equivalence, which gives (d).
\end{proof}
\begin{corollary}
If  $k$ is a normalized complete Pick kernel on $X$ and  $\sup_{x} k(x,x) = \infty$, then for
 $1 \leq p < q < \infty$ the containment
$\chq(k) \subseteq \chp(k)$ is strict.
\end{corollary}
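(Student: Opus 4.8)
The plan is to deduce strictness from the failure of the two norms to be equivalent, which in turn follows from the sharp pointwise estimates of Theorem~\ref{thmc1} applied to the kernel functions. Since the inclusion $\chq(k) \subseteq \chp(k)$ is contractive (Proposition~\ref{prop:basic_inclusion}) and both spaces are Banach function spaces on $X$, the bounded inverse theorem shows that if the two spaces coincided as sets, then the inclusion would be a topological isomorphism; that is, there would be a constant $C$ with $\|f\|_{\chq(k)} \le C \|f\|_{\chp(k)}$ for all $f \in \chq(k)$. I would derive a contradiction by exhibiting kernel functions that violate this inequality.

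First I would record the relevant norm asymptotics. For $1 < r < \infty$, Theorem~\ref{thmc1}(a), read with the conjugate index $r$ in the role of $q$, gives $\|k_x\|_{\mathcal{H}^r(k)} \approx k(x,x)^{1 - 1/r}$, while for $r = 1$ Theorem~\ref{thmc1}(c) gives only the upper bound $\|k_x\|_{\cho(k)} \lesssim 1 + \log k(x,x)$. In all cases the exponent $1 - 1/r$ is strictly increasing in $r$, so the $\chq$-norm of $k_x$ grows strictly faster in $k(x,x)$ than the $\chp$-norm.

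Next, using the hypothesis $\sup_x k(x,x) = \infty$, I would choose a sequence $x_n \in X$ with $k(x_n,x_n) \to \infty$ and test the putative inequality on $f = k_{x_n}$. If $p > 1$, then $\|k_{x_n}\|_{\chq(k)} \le C \|k_{x_n}\|_{\chp(k)}$ becomes, after inserting the equivalences, $k(x_n,x_n)^{1 - 1/q} \lesssim k(x_n,x_n)^{1 - 1/p}$, i.e. $k(x_n,x_n)^{1/p - 1/q} \lesssim 1$; since $p < q$ forces $1/p - 1/q > 0$, this contradicts $k(x_n,x_n) \to \infty$. If $p = 1$, the same inequality reads $k(x_n,x_n)^{1 - 1/q} \lesssim 1 + \log k(x_n,x_n)$, which again fails because any positive power of $k(x_n,x_n)$ eventually outgrows its logarithm. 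Either way the assumed norm equivalence is impossible, so $\chq(k) \subsetneq \chp(k)$.

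There is no serious obstacle here, since the substantive work has already been carried out in Theorem~\ref{thmc1}; the only points requiring care are the bookkeeping of conjugate exponents, so as to confirm that $1-1/r$ is the correct power and that it is increasing in $r$, and the separate, easier treatment of the endpoint $p = 1$, where only an upper bound on $\|k_x\|_{\cho(k)}$ is available but a logarithmic bound still loses against a positive power of $k(x,x)$.
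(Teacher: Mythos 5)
Your proof is correct and is precisely the argument the paper intends: the corollary is stated without proof immediately after Theorem \ref{thmc1}, whose sharp asymptotics $\|k_x\|_{\mathcal{H}^r} \approx k(x,x)^{1-1/r}$ (for $1<r<\infty$), combined with the contractive inclusion of Proposition \ref{prop:basic_inclusion} and the bounded inverse theorem, yield exactly the contradiction you describe when tested on kernel functions with $k(x_n,x_n)\to\infty$. Your separate treatment of the endpoint $p=1$, where only the logarithmic upper bound $\|k_x\|_{\mathcal{H}^1} \lesssim 1+\log k(x,x)$ from part (c) is available but still loses to the positive power $k(x,x)^{1-1/q}$, is the right and necessary bookkeeping.
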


\begin{remark}
If $f$ is in $\chp$ for $1 \leq p < \infty$, then one has the pointwise estimate
$|f(x)| \leq \| f \| \| \delta_x \|_{(\chp)^*}$. But since linear combinations of kernel functions are dense
in $\chp$ by Corollary \ref{cor:dense} and each individual kernel function is bounded, one can improve this, for each fixed $f$,  to
\[
|f(x)| \ = \ o( \| \delta_x \|_{(\chp)^*}), \quad k(x,x) \to \infty .
\]
Similarly, if $f \in {\rm Han}_0$ one gets
\[
|f(x)| \ = \ o( \| k_x \|_{\cho}), \quad \|  k_x \|_{\cho} \to \infty .
\]
\end{remark}

Below, we will provide an example to show that in general complete Pick spaces, the estimate in part (c) of Theorem \ref{thmc1}
need not be an equivalence.

Recall that if $\mathcal{M}$ is a reproducing kernel Hilbert space on $X$, then a sequence $(x_n)$ in $X$ is said to be an interpolating
sequence if the evaluation map
\[
E: \varphi \mapsto (\varphi(x_n))
\]
maps $  \Mult(\mathcal{M})$ onto  $\ell^\infty $.

If $\alpha_n$ is a sequence of positive numbers, and $p \geq 1$,
we let $\ell^p(\alpha_n)$ denote the
Banach sequence space with norm
\[
\| (c_n) \| := \left( \sum_{n=1}^\infty |c_n|^p \alpha_n \right)^{1/p} .
\]
\begin{defin}
The sequence $(x_n)$ is an interpolating sequence for $\chp$ if the evaluation map $E$
maps $\chp$ into and onto $\ell^p( 1/\| \delta_{x_n} \|^p_{(\chp)^*})$.
\end{defin}

The closed graph theorem shows that if $(x_n)$ is an interpolating sequence for $\mathcal{H}^p$, then $E$ is a bounded
map from $\mathcal{H}^p$ onto $\ell^p( 1/\| \delta_{x_n} \|^p_{{\chp}^*})$, so by the open mapping theorem,
the induced map $\mathcal{H}^p / \ker(E) \to \ell^p( 1/\| \delta_{x_n} \|^p_{{\chp}^*})$ has a bounded
inverse. The norm of the inverse is usually called the constant of interpolation.

Shapiro and Shields \cite{shashi61} showed that in the case of the Hardy space of the disc, the interpolating
sequences for $H^p$ are the same for $1 \le p \le \infty$.
It was observed by Marshall and Sundberg that if $\M$ is a complete Pick space, the interpolating
sequences for $\M = \cht $ and $\Mult(\M)$ are the same \cite{marsun}.
In \cite{arsw11}, it was shown that for the Dirichlet space, the interpolating sequences for $\cho$ and $\mathcal{H}^2$
are also the same.
Their proof carries over to any complete Pick space.
We first prove the easy implication, which is valid without the complete Pick assumption.
If $k$ is a kernel on $X$ and $V \subset X$,
we let $k \big|_V$ denote the restriction of $k$ to $V \times V$.
Thus, $\mathcal{H}^2(k|_V)$ is a space of functions on $V$.

\begin{lemma}
  \label{lem:IS_H^2_H^1}
  Let $\mathcal{M}$ be a reproducing kernel Hilbert space on $X$ with kernel $k$ satisfying assumption (A)
  and let $(x_n)$ be an interpolating sequence for $\mathcal{H}^2$.
  \begin{enumerate}[label=\normalfont{(\alph*)}]
    \item The sequence $(x_n)$ is an interpolating sequence for $\mathcal{H}^1$.
    \item The evaluation map $E: \mathcal{H}^1 \to \ell^1(1 / \|\delta_{x_n}\|_{{\mathcal{H}^1}^*})$ has a bounded linear right-inverse.
    \item If $V = \{x_n: n \in \mathbb{N}\}$, then
  \begin{equation*}
    \|h\|_{\mathcal{H}^1(k|_V)} \approx \sum_{n} \frac{|h(x_n)|}{k(x_n,x_n)}.
  \end{equation*}
  for all $h \in \mathcal{H}^1(k|_V)$.
  \end{enumerate}
\end{lemma}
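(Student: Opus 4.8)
The plan is to derive all three parts from a single construction, namely a bounded \emph{linear} right inverse of the evaluation map built out of the $\mathcal{H}^2$-interpolation data, using the elementary observation that squaring a function leaves its $\{0,1\}$-valued restriction to $V$ unchanged.

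First I would unwind the hypothesis. Since $(x_n)$ interpolates $\mathcal{H}^2 = \mathcal{M}$ and $\|\delta_{x_n}\|_{(\mathcal{H}^2)^*} = \|k_{x_n}\|_{\mathcal{H}^2} = k(x_n,x_n)^{1/2}$, the map $E \colon \mathcal{M} \to \ell^2(1/k(x_n,x_n))$, $f \mapsto (f(x_n))$, is bounded and surjective. As a surjection of Hilbert spaces it has a bounded linear right inverse $T$ (for instance $T = E^*(EE^*)^{-1}$). Writing $e_n$ for the $n$-th standard basis vector and setting $w_n := T e_n$, I obtain functions $w_n \in \mathcal{M}$ with $w_n(x_m) = \delta_{nm}$ and $\|w_n\|_{\mathcal{M}}^2 \le \|T\|^2 \|e_n\|_{\ell^2(1/k(x_n,x_n))}^2 = \|T\|^2 / k(x_n,x_n)$.

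The key step is to define $S \colon \ell^1(1/k(x_n,x_n)) \to \mathcal{H}^1$ by $S(c) = \sum_n c_n w_n^2$. Because $w_n^2 = w_n \cdot w_n \in \mathcal{M} \odot \mathcal{M}$ with $\|w_n^2\|_{\mathcal{H}^1} \le \|w_n\|_{\mathcal{M}}^2 \le \|T\|^2 / k(x_n,x_n)$, the series converges absolutely in $\mathcal{H}^1$ and $\|S(c)\|_{\mathcal{H}^1} \le \|T\|^2 \sum_n |c_n|/k(x_n,x_n)$. Squaring is exactly what makes $S$ linear and still interpolating: $w_n^2(x_m) = \delta_{nm}^2 = \delta_{nm}$, so continuity of point evaluations on $\mathcal{H}^1$ gives $S(c)(x_m) = c_m$. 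Recalling $\|\delta_{x_n}\|_{(\mathcal{H}^1)^*} = k(x_n,x_n)$ from Proposition \ref{propb2}, this is a bounded linear right inverse of $E$ on $\mathcal{H}^1$. The converse boundedness of $E$ is the easy half: writing $h = \sum_m f_m g_m$ and applying Cauchy--Schwarz together with the boundedness of $E$ on $\mathcal{M}$ gives $\sum_n |h(x_n)|/k(x_n,x_n) \le \|E\|_{\mathcal{M}\to\ell^2}^2 \sum_m \|f_m\|_{\mathcal{M}}\|g_m\|_{\mathcal{M}}$, hence $\le \|E\|^2 \|h\|_{\mathcal{H}^1}$ after taking an infimum over representations. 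Boundedness of $E$ gives the ``into'' assertion of (a), while $S$ gives the ``onto'' assertion together with all of (b).

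For (c) I would run the same argument inside the restricted space. The kernels $\{k|_V(\cdot,x_n)\}$ of $\mathcal{M}(k|_V)$ have the same Gram matrix as $\{k_{x_n}\}$ in $\mathcal{M}$, so $(x_n)$ again interpolates $\mathcal{H}^2(k|_V) = \mathcal{M}(k|_V)$ with the same constants; hence $E \colon \mathcal{H}^1(k|_V) \to \ell^1(1/k(x_n,x_n))$ is bounded and has a bounded linear right inverse $S$ as above. The extra feature on $V$ is that $E$ is injective, since a function on $V = \{x_n\}$ is determined by its values; therefore $S E = \mathrm{id}$, giving $\|h\|_{\mathcal{H}^1(k|_V)} = \|S E h\|_{\mathcal{H}^1(k|_V)} \le \|S\| \sum_n |h(x_n)|/k(x_n,x_n)$, and combined with boundedness of $E$ this is the equivalence in (c). The one point needing care, and the main obstacle, is producing a \emph{linear} right inverse: the naive factorization $c_n = |c_n|^{1/2} e^{i \arg c_n} \cdot |c_n|^{1/2}$ interpolates the data by a single product $fg$ with the correct norm but is not linear in $c$, whereas the atomic sum $\sum_n c_n w_n^2$ is linear precisely because squaring preserves the interpolation pattern $\delta_{nm}$.
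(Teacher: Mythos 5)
Your proof is correct and takes essentially the same route as the paper: boundedness of $E$ on $\mathcal{H}^1$ via Cauchy--Schwarz over representations $h = \sum_m f_m g_m$, a bounded linear right inverse built from the squares $w_n^2$ of $\mathcal{H}^2$-interpolating functions (the same atomic trick, exploiting $w_n^2(x_m)=\delta_{nm}$), and injectivity of the evaluation map on $V$ to upgrade the right inverse to the two-sided estimate in (c). The only cosmetic differences are that the paper produces the $w_n$ via the open mapping theorem rather than the pseudo-inverse $E^*(EE^*)^{-1}$, and that it handles (c) by factoring $E$ through the contractive restriction map $\mathcal{H}^1 \to \mathcal{H}^1(k|_V)$ from Lemma \ref{lem:comp} in a commutative diagram, rather than re-deriving the interpolation data inside the restricted space via the Gram matrix as you do.
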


\begin{proof}
  By Proposition \ref{propb2} (a), we have $\|\delta_x\|_{(\mathcal{H}^1)^*} = k(x,x)$.
  Lemma \ref{lem:comp} shows that the restriction map $R: \mathcal{H}^1 \to \mathcal{H}^1(k|_V)$
  is contractive. Thus, the lemma will be proved once we show that
  \begin{enumerate}
    \item the map $E_V: h \mapsto (h(x_n))$ maps $\mathcal{H}^1(k |_V)$ boundedly into $\ell^1(1/k(x_n,x_n))$, and
    \item there is a bounded linear operator $T: \ell^1(1/k(x_n,x_n)) \to \mathcal{H}^1$
      so that $E \circ T$ is the identity.
  \end{enumerate}
  Indeed, this follows from the commutative diagram
  \begin{equation*}
  \begin{tikzcd}
    \mathcal{H}^1 \arrow[rd,"E"'] \arrow[r,"R"] & \mathcal{H}^1(k|_V) \ar[d,"E_V"] \\
                                           & \ell^1(\frac{1}{k(x_n,x_n)})
  \end{tikzcd}
  \end{equation*}

  and injectivity of $E_V$.

  To show (1), let
  $h \in \mathcal{H}^1(k|_V)$ with $\|h\|_{\mathcal{H}^1(k|_V)} < 1$.
  Then there exist $f_j,g_j \in \mathcal{M}$ so that
  $h = \sum_j f_j|_V g_j|_V$ and $\sum_j \|f_j\|_{\mathcal{M}} \|g_j\|_{\mathcal{M} } < 1$.
  By the Cauchy--Schwarz inequality,
  \beq
    \sum_n \frac{|h(x_n)|}{k(x_n,x_n)}
  & \  \le  \ & \sum_n \sum_j \frac{|f_j(x_n) g_j(x_n)|}{k(x_n,x_n)}\\
  &\le &
    \sum_j \left[ \sum_n \frac{|f_j(x_n) |^2}{k(x_n,x_n)}
      \sum_m \frac{|g_j(x_m) |^2}{k(x_m,x_m)}  \right]^{1/2} \\
    & \le &
 \| E \|^2_{\cht \to \ell^2(1/k(x_n,x_n))} .
\eeq

So $E$ maps $\mathcal{H}^1$ boundedly into $\ell^1(1/k(x_n,x_n))$.

As for (2), observe that since $(x_n)$ is an interpolating sequence for $\mathcal{H}^2$,
the open mapping theorem implies that there exists a sequence $(f_n)$ in $\mathcal{H}^2$
satisfying $f_n(x_k) = \delta_{nk}$ and $\|f_n\|_{\mathcal{H}^2}^2 \lesssim 1/k(x_n,x_n)$,
hence $\|f_n^2\|_{\mathcal{H}^1} \lesssim 1/k(x_n,x_n)$.
Define
\begin{equation*}
  T: \ell^1(1/k(x_n,x_n)) \to \mathcal{H}^1, \quad (w_n) \mapsto \sum_{n} w_n f_n^2.
\end{equation*}
The series converges absolutely in $\mathcal{H}^1$, the operator $T$ is bounded, and $T(w_n) (x_k) = w_k$,
so $E \circ T$ is the identity.
\end{proof}

%
%

\begin{lemma}
\label{leme5}
Let $u$ and $v$ be unit vectors in a Hilbert space $\M$.
Let $\tau$ and $\omega$ be complex numbers satisfying $|\tau| = |\omega|$ and
\be
\label{eqe51}
\tau \la v, u \ra \= \omega \la u, v \ra .
\ee
Then
\begin{equation}
\label{eqe52}
\nonumber
\| \rankone{u}{\overline{u}} - \omega \rankone{v}{\overline{v}} \|_{\M \to \overline{\M}}
\=
\|  \rankone{u}{u}- \tau  \rankone{v}{v} \|_{\M \to \M} .
\end{equation}
\end{lemma}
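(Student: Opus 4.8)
The plan is to prove the stronger \emph{pointwise} identity
\[
\| (\rankone{u}{\overline{u}} - \omega \rankone{v}{\overline{v}}) f \|_{\overline{\M}}
\ = \
\| (\rankone{u}{u} - \tau \rankone{v}{v}) f \|_{\M}
\qquad (f \in \M),
\]
from which \eqref{eqe52} follows at once by taking the supremum over the unit ball of $\M$. Thus no singular-value or spectral analysis is needed: it suffices to expand both squared norms. I would abbreviate $\alpha = \langle f,u\rangle$, $\beta = \langle f,v\rangle$, and $a = \langle u, v\rangle$, so that $\langle v,u\rangle = \overline{a}$, and recall $\|u\| = \|v\| = 1$.

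First I would compute the $\M$-side. Writing $Tf := \langle f,u\rangle u - \tau \langle f,v\rangle v = \alpha u - \tau \beta v$ and expanding $\langle Tf, Tf\rangle_\M$ using sesquilinearity together with $\langle u,u\rangle = \langle v,v\rangle = 1$, the two mixed terms are complex conjugates of one another, and one obtains
\[
\|Tf\|_\M^2 \ = \ |\alpha|^2 + |\tau|^2 |\beta|^2 - 2\,\Re\!\big( \tau \overline{a}\, \overline{\alpha}\beta \big).
\]
Next I would compute the $\overline{\M}$-side for $Hf := \alpha \overline{u} - \omega \beta \overline{v}$. Here the only genuine point of care is the structure of the conjugate Hilbert space: one uses that $\langle \overline{x}, \overline{y}\rangle_{\overline{\M}} = \langle y,x\rangle_{\M}$, so that $\langle \overline{u},\overline{v}\rangle_{\overline{\M}} = \langle v,u\rangle_{\M} = \overline{a}$ while $\langle \overline{v},\overline{u}\rangle_{\overline{\M}} = \langle u,v\rangle_{\M} = a$. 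The same expansion then yields
\[
\|Hf\|_{\overline{\M}}^2 \ = \ |\alpha|^2 + |\omega|^2 |\beta|^2 - 2\,\Re\!\big( \omega a\, \overline{\alpha}\beta \big).
\]
I would emphasize that the swap of $\langle u,v\rangle$ and $\langle v,u\rangle$ forced by the conjugate inner product is exactly what makes the coefficient $\omega a$ appear on the $\overline{\M}$-side where $\tau\overline{a}$ appears on the $\M$-side; this is the structural reason the hypotheses are stated asymmetrically in $\tau,\omega$ and in $\langle v,u\rangle,\langle u,v\rangle$.

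Comparing the two expansions, the coefficients of $|\beta|^2$ agree because $|\tau| = |\omega|$. For the mixed terms I would invoke the hypothesis \eqref{eqe51} in the form $\tau \overline{a} = \omega a$ as an \emph{equality of scalars}; this gives $\tau\overline{a}\,\overline{\alpha}\beta = \omega a\,\overline{\alpha}\beta$ for every $f$, hence the real parts coincide. Therefore $\|Hf\|_{\overline{\M}} = \|Tf\|_\M$ for all $f$, proving \eqref{eqe52}. I expect the main (and only real) obstacle to be bookkeeping with the conjugate space $\overline{\M}$ — getting the conventions $\langle \overline{x},\overline{y}\rangle_{\overline{\M}} = \langle y,x\rangle_{\M}$ and the scalar multiplication right — rather than any analytic difficulty. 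A pleasant byproduct of using \eqref{eqe51} as a scalar identity is that no separate treatment of the degenerate case (when $u$ and $v$ are linearly dependent) is required, since the pointwise identity holds for every $\alpha,\beta$ regardless of the dimension of $\operatorname{span}\{u,v\}$.
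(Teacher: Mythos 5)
Your proof is correct and is essentially the paper's own argument: both expand the two squared norms and match terms using $|\tau|=|\omega|$ together with \eqref{eqe51} read as the scalar identity $\tau\overline{\langle u,v\rangle}=\omega\langle u,v\rangle$, which yields equality of the norms pointwise in $f$ and hence of the operator norms. The only cosmetic difference is that the paper handles the conjugate-space bookkeeping by composing with the antilinear isometry $\overline{\M}\to\M$, $\bar f\mapsto f$, so as to compute both sides inside $\M$, whereas you compute directly in $\overline{\M}$ via $\langle \bar x,\bar y\rangle_{\overline{\M}}=\langle y,x\rangle_{\M}$; the resulting expansions are identical.
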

\bp
Let $C : \overline{\M} \to \M$ be the anti-linear isometric operator $ \bar f \mapsto f$.
The norm squared of $ C ( \rankone{u}{\overline{u}} - \omega \rankone{v}{\overline{v}})$ is
\begin{equation}
\label{eqe53}
\sup_{ \| f \|_\M = 1} \| \la u, f \ra u - \bar \omega \la v , f \ra v \|^2 .
\end{equation}
The norm squared of $\rankone{u}{u} - \tau \rankone{v}{v}$ is
\begin{equation}
\label{eqe54}
\sup_{ \| f \|_\M = 1} \| \la f, u \ra u - \tau \la f , v \ra v \|^2 .
\end{equation}
Expanding \eqref{eqe53} and \eqref{eqe54} and using \eqref{eqe51}, we see that they are equal.
\ep

If $k$
is a normalized kernel on $X$, the formula
\begin{equation*}
  \dd_k(x,y)=\sqrt{1-\frac{|k(x,y)|^2}{k(x,x)k(y,y)}}
\end{equation*}
defines a pseudo-metric on $X$; see \cite[Lemma 9.9]{ampi}.

\begin{lemma}
  \label{lem:metric_han} Let $k$ be a normalized kernel on $X$ satisfying assumption (A).
  For $x \in X$ define $b_x=\frac{k_x}{k(x,x)}$. Let $x,y \in X$ and let $\omega$ be a unimodular complex
  number satisfying $\langle k_y,k_x \rangle = \omega \langle k_x,k_y \rangle$. Then $\dd_k(x,y)=\|b_x- \overline{\omega} b_y\|_{\Han}$.
\end{lemma}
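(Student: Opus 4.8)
The plan is to compute $\|b_x-\overline\omega b_y\|_{\Han}$ by identifying the Hankel operator $H_{b_x-\overline\omega b_y}$ explicitly, converting its $\M\to\overline{\M}$ operator norm into an ordinary $\M\to\M$ operator norm via Lemma \ref{leme5}, and recognizing the result as the norm of a difference of two rank-one orthogonal projections. The first thing I would record is that the assignment $b\mapsto H_b$ is conjugate-linear: this is clear from $H_b 1=\overline b$, or from the characterizing identity $\langle H_b f,\overline\phi\rangle=\langle\phi f,b\rangle$, whose right-hand side is conjugate-linear in $b$. Writing $\hat k_x=k_x/\sqrt{k(x,x)}$ for the unit-normalized kernel and recalling from the proof of Proposition \ref{propb2}(a) that $H_{k_x}=\rankone{k_x}{\overline{k_x}}$, conjugate-linearity together with $b_x=k_x/k(x,x)$ gives $H_{b_x}=\rankone{\hat k_x}{\overline{\hat k_x}}$, and likewise for $y$. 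Applying conjugate-linearity once more, the coefficient $-\overline\omega$ becomes $-\omega$, so that
\[
H_{b_x-\overline\omega b_y}=\rankone{\hat k_x}{\overline{\hat k_x}}-\omega\,\rankone{\hat k_y}{\overline{\hat k_y}},
\]
and hence $\|b_x-\overline\omega b_y\|_{\Han}$ equals the $\M\to\overline{\M}$ norm of the right-hand side.

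Next I would apply Lemma \ref{leme5} with $u=\hat k_x$, $v=\hat k_y$ and the lemma's scalar equal to our $\omega$. Dividing the hypothesis $\langle k_y,k_x\rangle=\omega\langle k_x,k_y\rangle$ by $\sqrt{k(x,x)k(y,y)}$ yields $\langle v,u\rangle=\omega\langle u,v\rangle$, so the compatibility condition $\tau\langle v,u\rangle=\omega\langle u,v\rangle$ of the lemma holds with $\tau=1$ (and $|\tau|=|\omega|=1$). The lemma then gives
\[
\|b_x-\overline\omega b_y\|_{\Han}=\|\rankone{u}{u}-\rankone{v}{v}\|_{\M\to\M}.
\]

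Finally, since $u$ and $v$ are unit vectors, $\rankone{u}{u}$ and $\rankone{v}{v}$ are the orthogonal projections onto $\mathbb{C}u$ and $\mathbb{C}v$. A direct computation on the (at most two-dimensional) subspace $\operatorname{span}\{u,v\}$ shows that the norm of the difference of these projections is $\sqrt{1-|\langle u,v\rangle|^2}$. Since $|\langle u,v\rangle|^2=|\langle\hat k_x,\hat k_y\rangle|^2=|k(x,y)|^2/(k(x,x)k(y,y))$, this is exactly $\dd_k(x,y)$, completing the proof.

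I expect the only genuinely delicate point to be the bookkeeping of complex conjugates. It is precisely the conjugate-linearity of $b\mapsto H_b$ that turns the coefficient $\overline\omega$ appearing in the statement into $\omega$, and this is what makes Lemma \ref{leme5} applicable with $\tau=1$, so that the resulting $\M\to\M$ operator is a genuine difference of projections rather than a more complicated rank-two operator whose norm would not reduce to $\dd_k$. I would therefore take care to state the conjugate-linearity explicitly and to track exactly which of $\omega,\overline\omega$ appears at each stage.
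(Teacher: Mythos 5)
Your proof is correct and takes essentially the same route as the paper's: identify $H_{b_x}$ as the rank-one operator $f \mapsto \langle f, u_x \rangle \overline{u_x}$ with $u_x = k_x/\sqrt{k(x,x)}$, invoke Lemma \ref{leme5} with $\tau = 1$ (the hypothesis $\langle k_y,k_x\rangle = \omega\langle k_x,k_y\rangle$ normalizing to exactly the compatibility condition) to reduce to $\|P_{u_x} - P_{u_y}\|$, and evaluate this norm as $\sqrt{1-|\langle u_x,u_y\rangle|^2} = \dd_k(x,y)$. Your explicit bookkeeping of the conjugate-linearity of $b \mapsto H_b$, which converts the coefficient $\overline{\omega}$ into $\omega$, is precisely the step the paper leaves implicit in writing $\|b_x - \overline{\omega} b_y\|_{\Han} = \|H_{b_x} - \omega H_{b_y}\|$, and your direct two-dimensional computation of the projection-difference norm is interchangeable with the paper's trace-zero, rank-two determinant argument.
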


\begin{proof} If $u_x=\frac{k_x}{\sqrt{k(x,x)}}$, then
  $H_{b_x}$ is the rank one operator $f \mapsto \langle f,u_x \rangle \overline{u_x}$.
  Thus, if $P_{u_x}$ denotes the orthogonal projection from $\mathcal{H}$ onto $\mathbb{C} u_x$, i.e.
  $P_{u_x} f = \langle f,u_x \rangle u_x$, then by Lemma \ref{eqe51}, we have
  \begin{equation*}
    \|b_x - \overline{\omega} b_y\|_{\Han} = \|H_{b_x} - \omega H_{b_y}\|_{\mathcal{H}^2 \to \overline{\mathcal{H}^2}}
    = \|P_{u_x} - P_{u_y}\|_{\mathcal{H}^2 \to \mathcal{H}^2}.
  \end{equation*}
  As $u_x$ and $u_y$ are unit vectors, we have
  \begin{equation*}
    \|P_{u_x} - P_{u_y} \| = \sqrt{1 - | \langle u_x,u_y \rangle|^2} = \dd_k(x,y)
  \end{equation*}
  (this can be seen by observing that $P_{u_x} - P_{u_y}$ is a trace $0$, rank $2$ self-adjoint operator,
with determinant $|\la u_x, u_y\ra|^2 - 1$ on the two-dimensional space spanned by $u_x$ and $u_y$).
\end{proof}

We now establish the announced equality of interpolating sequences for $\mathcal{H}^1$ and $\mathcal{H}^2$
in the setting of complete Pick spaces.

\bt
\label{thmIS}
Let $k$ be a normalized complete Pick kernel on $X$. Then the interpolating sequences for
$\cho$ and $\cht$ coincide.
\et
\bp
In light of Lemma \ref{lem:IS_H^2_H^1}, it remains to show that every interpolating sequence
$(x_n)$ for $\mathcal{H}^1$ is interpolating for $\mathcal{H}^2 = \mathcal{M}$.
To this end, by \cite{ahmrIS}, we need to show that
\begin{enumerate}
  \item  the sequence is weakly separated, which means there exists $\delta > 0$ so that for $m \neq n$ we have
\[
  \dd_k(x_m,x_n) \ge \delta,
\]
and
\item
 it satisfies the Carleson measure condition, namely there exists some constant $C$ so that
 \[
\sum_{n=1}^\infty\frac{ |f(x_n)|^2 }{k(x_n, x_n)} \ \leq \ C \|f\|^2 \qquad \forall \ f \in \M.
\]
\end{enumerate}
As $ f^2 \in \cho$ with $\| f^2 \|_{\cho}  \leq \| f \|^2_\M$, we get (2) immediately.

To see weak separation, we use the open mapping theorem to find a constant $c > 0$ so that for any
distinct points $x_m, x_n$,
there is a function $h$ of norm at most $1$ in $\cho$
with $h(x_m) = c k(x_m, x_m)$ and $h(x_n) = -c \bar \omega k(x_n, x_n)$,
where $\omega \la k_{x_m} , k_{x_n} \ra = \la k_{x_n} , k_{x_m} \ra$.
This means
\be
\label{eqe41}
\big\| \frac{1}{k(x_m,x_m)} \delta_{x_m} -
\frac{\omega}{k(x_n,x_n)} \delta_{x_n}  \big\|_{(\mathcal H^1)^*} \ \geq \ 2c .
\ee
Write $b_x = k_x / k(x,x)$.
Using the anti-linear $\mathcal{H}^1$--$\Han$ duality, and Lemma \ref{lem:metric_han}, the left-hand side of \eqref{eqe41}
is equal to
\begin{equation*}
  \|b_{x_m} - \overline{\omega} b_{x_n}\|_{\Han} = d_k(x_m,x_n),
\end{equation*}
hence $(x_n)$ is weakly separated.
\ep

  We now give an example of a normalized complete Pick space with unbounded kernel in which every function in $\Han$ is bounded.

\begin{example}
\label{exam316}
{\rm
  Let $(e_n)$ denote the standard basis of $\ell^2$.
  Let $(r_n)$ be a sequence in $[0,1)$ tending to $1$ with the properties that $r_0 = 0$, the sequence $x_n = r_n e_n$ is an interpolating
  sequence for $\Mult(H^2_\infty)$ and $\sum_n (1 - r_n^2) < \infty$
  (this can be done for instance by \cite[Proposition 5.1]{ahmrIS}; the last property in fact follows from being an interpolating sequence).
  Let $V = \{ r_n e_n: n \in \mathbb{N} \}$ and let $\mathcal{M} = H^2_\infty \big|_V$,
  which is a normalized complete Pick space on $V$ whose kernel $k$
  satisfies $\lim_n k(x_n,x_n) = \infty$.
  We claim that $\sup_x \|\delta_x\|_{\Han} < \infty$.

  By Theorem \ref{thmc1}, $\|\delta_x\|_{\Han} = \|k_x\|_{\mathcal{H}^1}$, so by Lemma \ref{lem:IS_H^2_H^1} (c), it suffices to show that
  \begin{equation*}
    \sup_j \sum_n \frac{|k(x_n,x_j)|}{k(x_n,x_n)} < \infty.
  \end{equation*}
  But $k(x_n,x_j) = \frac{1}{1 - r_n r_j \langle e_n, e_j \rangle } = 1$ if $n \neq j$, hence
  \begin{equation*}
    \sum_n \frac{|k(x_n,x_j)|}{k(x_n,x_n)} = 1 + \sum_{n \neq j} \frac{1}{k(x_n,x_n)}
    \le 1 + \sum_n (1 - r_n^2) < \infty,
  \end{equation*}
  which is independent of $j$.
	}
\end{example}

The preceding example takes place on the ball in infinite dimensions.
In fact, the $\log$ estimate in part (c) of Theorem \ref{thmc1} may not be an equivalence even on the disc, as the following example shows.

\begin{example}
\label{exam317}
{\rm
    Let $(y_n)$ be a strictly increasing sequence in $[1,\infty)$, with $y_0 = 1$, and tending to infinity so fast
  that
  \begin{enumerate}
    \item $x_n: = 1 - \frac{1}{y_n}$ is an interpolating sequence for $H^\infty$,
    \item $\sum_n \frac{y_{n-1}}{y_n} < \infty$, and
    \item $\lim_{n \to \infty} \frac{n}{\log(y_n)} = 0$.
  \end{enumerate}
  For instance, $y_n = 2^{2^n}$ or $y_n = (n!)^2$ will do.
  Let $V = \{ x_n : n \in \mathbb{N} \} \subset \mathbb{D}$ and let $\mathcal{M} = H^2 \big|_V$.
  We will show that
  \begin{equation*}
    \lim_j \frac{\|\delta_{x_j}\|_{\Han}}{\log(k(x_j,x_j))} = 0.
  \end{equation*}
  Again by Theorem \ref{thmc1} and Lemma \ref{lem:IS_H^2_H^1} (c), it suffices to show that
  \begin{equation*}
    \lim_j \frac{1}{\log(k(x_j,x_j))} \sum_n \frac{|k(x_n,x_j)|}{k(x_n,x_n)} = 0.
  \end{equation*}
  To this end, note that
  \begin{equation*}
    \sum_n \frac{|k(x_n,x_j)|}{k(x_n,x_n)}
    = \sum_n \frac{1 - x_n^2}{1 - x_n x_j} \lesssim
    \sum_{n=0}^{j} \frac{1 - x_n^2}{1- x_n x_j} + \sum_{n=j+1}^\infty \frac{1 - x_n}{1 - x_n x_j}.
  \end{equation*}
  Each summand in the first sum in bounded by $1$. In the second sum, note that
  $x_j \le x_{n-1}$, hence
  \begin{equation*}
    \sum_n \frac{|k(x_n,x_j)|}{k(x_n,x_n)} \lesssim
    (j+1) + \sum_{n=j+1}^\infty \frac{1 - x_n}{1 - x_{n-1}}
    \le (j+1) + \sum_{n} \frac{y_{n-1}}{y_n},
  \end{equation*}
  and the last sum converges by Property (2). Thus,
  \begin{equation*}
    \frac{1}{\log(k(x_j,x_j))} \sum_n \frac{|k(x_n,x_j)|}{k(x_n,x_n)}
    \lesssim \frac{j+1}{\log(y_j)} \xrightarrow{j \to \infty} 0
  \end{equation*}
  by Property (3).
}
\end{example}

\begin{remark}
  Suppose that $\mathcal{M}$ is a normalized complete Pick space with kernel
  $k(x,y) = \frac{1}{1 - \langle b(x),b(y) \rangle}$, where $b: X \to \mathbb{B}_d$. Then
  the map $f \mapsto f \circ b$ takes $H^2_d$ to $\mathcal{H}$. By Lemma \ref{lem:comp},
  it also takes $\mathcal{H}^p(S)$ to $\mathcal{H}^p(k)$ for $1 \le p \le 2$, and it is easy to see that it
  maps $\Mult(H^2_d)$ to $\Mult(\mathcal{M})$. The preceding two examples
  show that in general, it does not map $\Han(S)$ to $\Han(k)$, because by part (d) of Theorem \ref{thmc1},
  we have $\|\delta_x\|_{\Han(S)} \approx 1 + \log(S(x,x))$. Similarly, Proposition \ref{prop:kernel_containment}
  does not hold with $\Han$ in place of $\mathcal{H}^p$.
\end{remark}

\section{Questions}
\label{sece}

\begin{question}
{\rm
There are many interesting Hilbert function spaces for which assumption (A) fails,
such as $\ell^2$, the Hardy space of the upper half-plane, and the Fock space.
One can still define an $\chp$ scale for these spaces for $p \in [1,2]$ by interpolating
between $\mom$ and $\M$. Is there a general method to identify the anti-duals of these
spaces with Banach function spaces on $X$?
}
\end{question}

\begin{question}
{\rm
How does one define the $\chp$ scale for $0 < p < 1$?
}
\end{question}

\begin{question}
  \label{quest:mult_int}
{\rm
When can we recover the $\chp$ spaces
isomorphically
by interpolating between $\M \odot \M$ and $\Mult(\M)$?
This is true for the Hardy space \cite{jo83}.
}
\end{question}

\begin{question}
{\rm
What are the multipliers of $\chp$?
When are they the same as $\Mult(\M)$?
For complete Pick spaces, Clou\^atre and the second named author \cite{CHar,Hartz20} show that
$\Mult(\M) = \Mult( \M \odot \M)$. Is this enough to get $\Mult(\chp) = \Mult(\M)$
for $1 \leq p \leq 2$?
}
\end{question}

\begin{question}
  \label{quest:Hpbig}
  {\rm
If $k(x,y) = \frac{1}{1 - \langle b(x),b(y) \rangle}$, does the map
$f \mapsto f \circ b$ take $\mathcal{H}^p(S)$ to $\mathcal{H}^p(k)$ for $p > 2$?}
\end{question}

A positive answer to Question \ref{quest:mult_int} would imply a positive answer to Question \ref{quest:Hpbig},
as the map $f \mapsto f \circ b$ takes multipliers to multipliers.

\begin{question}
  {\rm
Are the interpolating sequences for $\chp$ the same as for $\cht$ for $1 < p < 2$?
What about $2 < p < \infty$?}
\end{question}

\begin{question}
  {\rm What is a function theoretic description of $\chp$ for the Dirichlet space or for weighted Besov--Sobolev spaces?}
\end{question}


\begin{bibdiv}
\begin{biblist}

\bib{ag90b}{article}{
      author={Agler, Jim},
       title={{Nevanlinna-Pick interpolation on Sobolev space}},
        date={1990},
     journal={Proc. Amer. Math. Soc.},
      volume={108},
       pages={341\ndash 351},
}

\bib{agmc_cnp}{article}{
      author={Agler, Jim},
      author={McCarthy, John~E.},
       title={Complete {Nevanlinna-Pick} kernels},
        date={2000},
     journal={J. Funct. Anal.},
      volume={175},
      number={1},
       pages={111\ndash 124},
}

\bib{ampi}{book}{
      author={Agler, Jim},
      author={M\raise.45ex\hbox{c}Carthy, John~E.},
       title={Pick interpolation and {H}ilbert function spaces},
      series={Graduate Studies in Mathematics},
   publisher={American Mathematical Society, Providence, RI},
        date={2002},
      volume={44},
        ISBN={0-8218-2898-3},
         url={http://dx.doi.org/10.1090/gsm/044},
      review={\MR{1882259}},
}

\bib{ahmrRad}{incollection}{
      author={Aleman, Alexandru},
      author={Hartz, Michael},
      author={M{\raise.45ex\hbox{c}C}arthy, John~E.},
      author={Richter, Stefan},
       title={Radially weighted {B}esov spaces and the {P}ick property},
        date={2019},
   booktitle={Analysis of operators on function spaces},
      series={Trends Math.},
   publisher={Birkh\"{a}user/Springer, Cham},
       pages={29\ndash 61},
         url={https://doi-org.libproxy.wustl.edu/10.1007/978-3-030-14640-5_3},
      review={\MR{4019466}},
}

\bib{ahmrIS}{article}{
      author={Aleman, Alexandru},
      author={Hartz, Michael},
      author={M\raise.45ex\hbox{c}Carthy, John~E.},
      author={Richter, Stefan},
       title={Interpolating sequences in spaces with the complete {P}ick
  property},
        date={2019},
        ISSN={1073-7928},
     journal={Int. Math. Res. Not. IMRN},
      number={12},
       pages={3832\ndash 3854},
         url={https://doi.org/10.1093/imrn/rnx237},
      review={\MR{3973111}},
}

\bib{ahmrWP}{unpublished}{
      author={Aleman, Alexandru},
      author={Hartz, Michael},
      author={M\raise.45ex\hbox{c}Carthy, John~E.},
      author={Richter, Stefan},
       title={Weak products of complete {P}ick spaces},
        date={2019},
        note={Indiana Math. J., to appear},
}

\bib{arsw11}{article}{
      author={Arcozzi, N.},
      author={Rochberg, R.},
      author={Sawyer, E.},
      author={Wick, B.~D.},
       title={Function spaces related to the {D}irichlet space},
        date={2011},
        ISSN={0024-6107},
     journal={J. Lond. Math. Soc. (2)},
      volume={83},
      number={1},
       pages={1\ndash 18},
         url={https://doi-org.libproxy.wustl.edu/10.1112/jlms/jdq053},
      review={\MR{2763941}},
}

\bib{bl76}{book}{
      author={Bergh, J.},
      author={{L\"ofstr\"om}, J.},
       title={Interpolation spaces},
   publisher={Springer-Verlag},
     address={Berlin},
        date={1976},
}

\bib{cal64}{article}{
      author={Calder\'on, A.P.},
       title={Intermediate spaces and interpolation, the complex method},
        date={1964},
     journal={Studia Math.},
      volume={24},
       pages={113\ndash 190},
}

\bib{CHar}{article}{
      author={Clou\^{a}tre, Rapha\"{e}l},
      author={Hartz, Michael},
       title={Multipliers and operator space structure of weak product spaces},
        date={to appear},
     journal={Anal. PDE},
        note={arXiv:1909.12883},
}

\bib{cs98}{article}{
      author={Cobos, Fernando},
      author={Schonbek, Tomas},
       title={On a theorem by {L}ions and {P}eetre about interpolation between
  a {B}anach space and its dual},
        date={1998},
        ISSN={0362-1588},
     journal={Houston J. Math.},
      volume={24},
      number={2},
       pages={325\ndash 344},
      review={\MR{1690401}},
}

\bib{fs72}{article}{
      author={Fefferman, C.},
      author={Stein, E.~M.},
       title={{$H^{p}$} spaces of several variables},
        date={1972},
        ISSN={0001-5962},
     journal={Acta Math.},
      volume={129},
      number={3-4},
       pages={137\ndash 193},
         url={https://doi-org.libproxy.wustl.edu/10.1007/BF02392215},
      review={\MR{447953}},
}

\bib{Hartz20}{article}{
      author={Hartz, Michael},
       title={Every complete {P}ick space satisfies the column-row property},
     journal={arXiv:2005.09614},
}

\bib{jo83}{article}{
      author={Jones, Peter~W.},
       title={{$L^{\infty }$} estimates for the {$\bar \partial $} problem in a
  half-plane},
        date={1983},
        ISSN={0001-5962},
     journal={Acta Math.},
      volume={150},
      number={1-2},
       pages={137\ndash 152},
         url={https://doi.org/10.1007/BF02392970},
      review={\MR{697611}},
}

\bib{marsun}{unpublished}{
      author={Marshall, D.},
      author={Sundberg, C.},
       title={Interpolating sequences for the multipliers of the {Dirichlet}
  space},
        date={1994},
        note={Preprint; see
  http://www.math.washington.edu/$\sim$marshall/preprints/preprints.html},
}

\bib{pi16}{article}{
      author={Pick, Georg},
       title={{\"U}ber die {Beschr\"ankungen} analytischer {F}unktionen, welche
  durch vorgegebene {F}unktionswerte bewirkt werden},
        date={1916},
     journal={Math. Ann.},
      volume={77},
       pages={7\ndash 23},
}

\bib{pis96}{article}{
      author={Pisier, Gilles},
       title={The operator {H}ilbert space {${\rm OH}$}, complex interpolation
  and tensor norms},
        date={1996},
        ISSN={0065-9266},
     journal={Mem. Amer. Math. Soc.},
      volume={122},
      number={585},
       pages={viii+103},
         url={https://doi.org/10.1090/memo/0585},
      review={\MR{1342022}},
}

\bib{pi03}{book}{
      author={Pisier, Gilles},
       title={Introduction to operator space theory},
      series={London Mathematical Society Lecture Note Series},
   publisher={Cambridge University Press, Cambridge},
        date={2003},
      volume={294},
        ISBN={0-521-81165-1},
         url={http://dx.doi.org/10.1017/CBO9781107360235},
}

\bib{shashi61}{article}{
      author={Shapiro, H.S.},
      author={Shields, A.L.},
       title={On some interpolation problems for analytic functions},
        date={1961},
     journal={Amer. J. Math.},
      volume={83},
       pages={513\ndash 532},
}

\bib{wat00}{article}{
      author={Watbled, Fr\'{e}d\'{e}rique},
       title={Complex interpolation of a {B}anach space with its dual},
        date={2000},
        ISSN={0025-5521},
     journal={Math. Scand.},
      volume={87},
      number={2},
       pages={200\ndash 210},
         url={https://doi.org/10.7146/math.scand.a-14305},
      review={\MR{1795744}},
}

\bib{zhu}{book}{
      author={Zhu, K.},
       title={Operator theory in function spaces},
      series={Monographs and textbooks in pure and applied mathematics},
   publisher={Marcel Dekker, Inc.},
     address={New York},
        date={1990},
      volume={139},
}

\end{biblist}
\end{bibdiv}
\end{document}